\newtheorem{theorem}{Theorem}[section]
\newtheorem{lemma}[theorem]{Lemma}
\title{\huge{Factor Complexity of the Most Significant Digits of~$a^{n^d}$}}
\author{
  Mehdi Golafshan \thanks{Department of Mathematics, University of Liège,  Liège, Belgium.
\texttt{m.golafshan@uliege.be}} 
  \and 
  Ivan Mitrofanov\thanks{ Department of Mathematics, Saarland University, Saarbrücken, Germany. \texttt{phortim@yandex.ru}}
}
\date{}
\begin{document}

\maketitle

\begin{quote}
\small{
\textbf{Abstract.}
We investigate unipotent dynamics on a torus and apply these techniques to the following problem. Let \(d\) be a positive integer, and let \(a > 0\) be a real number. For an integer \(b \geqslant 5\), such that \(a\) and \(b\) are multiplicatively independent, consider the sequence \((\mathbf{w}_n)\), where \(\mathbf{w}_n\) is the most significant digit of \(a^{n^d}\) when expressed in base \(b\). We prove that the complexity function of the sequence \((\mathbf{w}_n)\) is, up to finitely many exceptions, a polynomial function.}
\end{quote}

\noindent \makebox[\textwidth][c]{\rule{.9\textwidth}{2pt}}
\begin{quote}
\quad
\noindent \textbf{Keywords:} 
The most significant digit 
 $\cdot$ Factor complexity $\cdot$
Unipotent dynamics on a torus.
$\cdot$
Symbolic dynamics
$\cdot$
Uniform distribution modulo $1$
$\cdot$
Equidistributed sequences
$\cdot$
Combinatorics on words
\end{quote}

\begin{quote}
\quad
\noindent \textbf{MSC 2020:}

\hspace{2cm}
\colorbox{yellow}{{\bf  Primary:}}
11K31; 37B10; 11A63

\hspace{2cm}
\colorbox{cyan}{{\bf  Secondary:}}
68R15; 11K06; 11K16
\end{quote}
\noindent \makebox[\textwidth][c]{\rule{0.8\textwidth}{2pt}}

\bigskip

\vspace{5mm}

\Large
\boxed{
\textbf{Funding}}
\normalsize

\medskip
\begin{quote}
Mehdi Golafshan is supported by the FNRS Research grant (\textbf{PDR 40014353}),
and the work of Ivan Mitrofanov
was supported by ERC Grant (grant agreement \textbf{No. 101097307}).
\end{quote}

\newpage
\tableofcontents  

\large
\section{Introduction}
The first recorded instance of the logarithmic distribution of leading digits was made in $1881$ by American astronomer Newcomb, who observed “how much faster the first pages wear out than the last ones'' and calculated the probabilities for the first and second digits using a few quick heuristics \cite{22}.

\medskip
For any non-zero real number \( x \) and any integer base \( b \geqslant 2 \), the {\it most significant digit} \( D_b(x) \) in base \( b \) is the unique integer \( j \in \llbracket 1, b-1 \rrbracket \) such that
\[
b^k \cdot j \leqslant |x| < b^k \cdot (j + 1)
\]
for some unique \( k \in \mathbb{Z} \).
 \cite{23}.

\medskip

The significant digits in many naturally occurring tables of numerical data do not follow an expected uniform distribution but rather, Benford's law, also known as   \textit{the significant-digit law}, is the empirical gem of statistical folklore.
More precisely,
in \cite{24},
Benford gave the exact law for the most significant digit is
$\log{\left( \frac{a+1}{a}\right)}$
for all $a \in \llbracket 1, 9\rrbracket$.
From a mathematical point of view, Benford’s Law is
 closely connected with the theory of uniform distribution modulo $1$ \cite{25}.
Using this relationship, Diaconis \cite{26} rigorously demonstrated in $1977$ that Benford's Law holds for a class of exponentially expanding sequences, including the sequence of factorials and powers of $2$, among others.

\section{Concepts and notations}

In this paper,
 the set of non-negative
integers (respectively integers, rational numbers, and real
numbers) is denoted by $\mathbb{N}$ (respectively, $\mathbb{Z}$, $\mathbb{Q}$, and $\mathbb{R}$).
Let $i,j \in \mathbb{Z}$ with
$i \leqslant j$. We let $\llbracket i,j\rrbracket$ denote the set of integers $\{i,i + 1,\ldots,j\}$.

\medskip

Two positive real numbers \(a\) and \(b\) are {\it multiplicatively independent} if \(\log_b a\) is not a rational number. In other words, there do not exist integers \(m\) and \(n\), not both zero, such that
\(
a^m = b^n
\)

\medskip

We make use of common notions in combinatorics on words, such as alphabet,
letter, word, length of a word, complexity function and usual definitions from symbolic dynamics.

Let $\mathcal{A}$ be a nonempty finite set of symbols, which we call an {\it alphabet}.
An
element $a \in \mathcal{A}$ is called a { \it letter}. A {\it word} over the alphabet $\mathcal{A}$ is a finite sequence
of elements of $\mathcal{A}$.
 Let $\epsilon$ denote the { \it empty word}. For a finite word $\textsc{\textbf{u}}$, we let $|\textsc{\textbf{u}}|$ denote
its length. For each $i \in \llbracket 0, |\textsc{\textbf{u}}|- 1 \rrbracket$, 
we let $\textsc{\textbf{u}}_i$ or $\textsc{\textbf{u}}[i]$ denote the $i^{\text{th}}$ letter of $\textsc{\textbf{u}}$ (and
we thus start indexing letters at $0$).

We use the notation $\mathcal{A}^*$ for the set of all
finite  words, and $\mathcal{A}^+=\mathcal{A}^* \setminus \{\epsilon\}$.
A word $\textsc{\textbf{u}}$ is a {\it factor}  of a word $\textsc{\textbf{w}}$ if there exist words
$\textsc{\textbf{p}}, \textsc{\textbf{q}} \in \mathcal{A}^*$
such that $\textsc{\textbf{w}} = \textsc{\textbf{p}}\textsc{\textbf{u}}\textsc{\textbf{q}}$.
In addition, $\textsc{\textbf{p}}$ is a {\it prefix} and $\textsc{\textbf{q}}$ is a { \it suffix} of $\textsc{\textbf{u}}$.

\medskip

Note that in the following, to avoid any ambiguity, the symbol $\#$ is used to denote the number of elements in a set, while the symbol $| \cdot |$ is used to refer to the number of letters forming a word, and in other places, it represents the absolute value.

This work investigates leading digit sequences from a complexity perspective.
There are numerous approaches to quantify the {\it complexity} of a word over a finite alphabet;
for instance, see \cite{5,6},
and
\cite{7}
for an overview of various complexity measures.
The factor complexity function will be used as our primary measure of complexity.

\subsection*{Complexity function}{\label{FC}}

The  {\it factor complexity} or  {\it complexity function} of a finite or infinite word  $\textsc{\textbf{w}}$ is the function
$n \mapsto \mathtt{p}_{\textsc{\textbf{w}}}(n)$,
which, for each integer $n$, gives the number $\mathtt{p}_{\textsc{\textbf{w}}}(n)$\footnote{If we do not need to refer to different words, we use the notation $\textsc{p}(n)$ for convenience.} of distinct factors  of length $n$ in that word \cite{8}.

\medskip

This  function  $ \mathtt{p}_{\textsc{\textbf{w}}}(n)$ has been extensively investigated.
 Morse and Hedlund in 
 $1938$
proved that an infinite word $\textsc{\textbf{w}}$ over a finite alphabet is eventually periodic if and only if 
 $\mathtt{p}_{\textsc{\textbf{w}}}(n)< n+1$
for some $n$
\cite{9}.

\medskip

\section{Summary of the result}

The following Olympiad problem was designed by  Kanel-Belov \cite{27}.

\medskip

\begin{quote}
    {\it  Consider the sequence 
$\textsc{\textbf{w}}$ 
whose $n^{\text{th}}$ term is the most significant  digit of $2^n$ in base 10.
Show that the number of different 
factors
made of $13$ consecutive digits 
from this sequence
is $57$,
i.e.,
$\mathtt{p}_{\textsc{\textbf{w}}}(13)=57$.
}
\end{quote}

\medskip

It can be shown that the number of different words with length $n$ is an affine function in $n$. Moreover, in \cite{28} it is shown that for all 
square-free bases $b \geqslant 5$ 
and all $a \in \mathbb{R}_{>0}$, where $a$ is not an integer 
power of $b$, the number of different factor of length $n$ in the sequence of the most significant digit of $a^k$ in base-$b$ representation is an affine function.

Separately in \cite{29}, this was is proved for some primes with a different method.

\medskip

In \cite{28}, it was suggested that the sequence of  the most  significant digit of $2^{n^2}$ has polynomial factor complexity. 
From Theorem \ref{main} it follows that the complexity function this sequence  is a polynomial of degree $3$
up to finitely many values.

\medskip

In this paper we prove the following result.

\begin{theorem}\label{main}
Let \(d \in \mathbb{Z}_{>0}\), let  \(b \geqslant 5\) be an integer and let \(a > 0\) be a real number such that \(a\) and \(b\) are multiplicatively independent. Consider the sequence \(\mathbf{w}\), where \(\mathbf{w}_n\) is the most significant digit of the number \(a^{n^d}\) when expressed in base \(b\).

Then there exists a polynomial \(\textsc{P}(k)\) of degree \(d(d+1)/2\) such that \(\textsc{P}(k) = \mathtt{p}_{\mathbf{w}}(k)\) for all sufficiently large \(k\).
\end{theorem}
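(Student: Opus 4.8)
The plan is to convert the leading-digit question into a problem about coding a unipotent orbit on the $d$-torus. Writing $\alpha = \log_b a$, which is irrational precisely because $a$ and $b$ are multiplicatively independent, the leading digit of $a^{n^d}$ is determined by the location of the fractional part $\{n^d \alpha\}$ within the partition of $[0,1)$ into the $b-1$ intervals $I_j = [\log_b j, \log_b(j+1))$, $j \in \llbracket 1, b-1\rrbracket$; the letter $\mathbf{w}_n$ is the index $j$ with $\{n^d\alpha\} \in I_j$. So $\mathbf{w}$ is a symbolic coding of the sequence $(\{n^d\alpha\})_n$ under this interval partition. To give this sequence a dynamical model I would use Newton's forward differences. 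Setting $P(n) = n^d\alpha$, the finite differences satisfy $\Delta^d P \equiv d!\,\alpha$ and $\Delta^{d+1}P = 0$, so the state $y^{(n)} = (P(n), \Delta P(n), \ldots, \Delta^{d-1}P(n)) \in \mathbb{T}^d$ evolves under the affine unipotent map
\[
T(y_1, \ldots, y_d) = (y_1 + y_2,\ y_2 + y_3,\ \ldots,\ y_{d-1} + y_d,\ y_d + d!\,\alpha).
\]
The first coordinate of $T^n y^{(0)}$ equals $\{n^d\alpha\}$, so $\mathbf{w}$ is the coding of the $T$-orbit of $y^{(0)}$ with respect to the partition $\mathcal{P}$ of $\mathbb{T}^d$ whose atoms are the slabs $\{y : y_1 \in I_j\}$. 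Since $\alpha$ is irrational, $T$ is uniquely ergodic and its orbits are equidistributed in $\mathbb{T}^d$; in particular the orbit is dense.

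The factors of length $k$ of $\mathbf{w}$ correspond to the atoms of the refined partition $\mathcal{P}_k = \bigvee_{i=0}^{k-1} T^{-i}\mathcal{P}$ that the orbit visits, and by density and unique ergodicity, for large $k$ every atom with nonempty interior is realized exactly once as an itinerary, so $\mathtt{p}_{\mathbf{w}}(k)$ equals the number of full-dimensional cells of $\mathcal{P}_k$. Each $T^{-i}\mathcal{P}$ is bounded by the hyperplanes $T^{-i}\{y_1 = c\}$; computing the power of the linear part of $T$ (whose matrix has binomial entries) shows these are the hyperplanes with normal vector $\big(\binom{i}{0}, \binom{i}{1}, \ldots, \binom{i}{d-1}\big)$, one family of $b-1$ parallel hyperplanes for each $i \in \llbracket 0, k-1\rrbracket$. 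Thus $\mathcal{P}_k$ is the cell decomposition of $\mathbb{T}^d$ cut out by a structured arrangement of hyperplanes in $k$ distinct directions.

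I would count the cells via the Euler-characteristic relation on $\mathbb{T}^d$ (whose Euler characteristic is $0$): a standard face-counting argument reduces the number of top-dimensional cells, up to lower-order contributions, to the number of vertices, i.e.\ transversal intersections of $d$ hyperplanes with $d$ distinct directions $i_1 < \cdots < i_d$. On the torus the number of such intersection points equals $\big|\det\big(\binom{i_l}{m}\big)_{1\le l\le d,\,0\le m\le d-1}\big|$, a Vandermonde-type determinant equal to $\prod_{l<l'}(i_{l'}-i_l)\big/\prod_{m=0}^{d-1}m!$. Multiplying by $(b-1)^d$ for the parallel choices and summing over all $d$-subsets of $\llbracket 0,k-1\rrbracket$ gives a sum of a homogeneous degree-$\binom{d}{2}$ polynomial over a $d$-dimensional index range of size $k$, hence a polynomial in $k$ of degree $\binom{d}{2} + d = d(d+1)/2$. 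The case $d=1$ recovers the known affine complexity (roughly $(b-1)k$ arcs on the circle), and $d=2$ gives degree $3$, matching the $2^{n^2}$ prediction.

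The main obstacle is twofold. First, I must show that the cell-counting function is genuinely a polynomial in $k$ for all large $k$, not merely polynomially bounded: this requires controlling degeneracies of the arrangement (concurrences of more than $d$ hyperplanes, or intersections falling on partition boundaries), showing that such coincidences are either absent or contribute in a way that stabilizes, and organizing all face-count contributions of every dimension through the torus Euler relation so that the lower-order terms also assemble into a single polynomial. Second, I must justify the passage from ``nonempty atom'' to ``occurring factor'' uniformly: equidistribution guarantees each open cell is visited, but I need that distinct cells yield distinct length-$k$ words and that boundary effects (the half-open nature of $I_j$, or orbit points landing exactly on a hyperplane) do not create or destroy finitely many factors in a way that breaks eventual polynomiality. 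This last point is precisely where the qualifier ``for all sufficiently large $k$'' enters.
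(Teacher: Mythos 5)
Your overall strategy coincides with the paper's: encode the leading digits as the itinerary of an orbit of an affine unipotent map on $\mathbb{T}^d$, identify factors of length $k$ with cells of the arrangement cut out by $k$ families of $b-1$ parallel hypersurfaces, and count cells through their extremal vertices, i.e.\ through Vandermonde-type determinants summed over $d$-subsets of $\llbracket 0,k-1\rrbracket$, giving degree $\binom{d}{2}+d=d(d+1)/2$. Your Newton-difference coordinates (coding by the \emph{first} coordinate, normals $\bigl(\binom{i}{0},\dots,\binom{i}{d-1}\bigr)$) are a genuine and rather pleasant variation: since $i=0$ contributes the normal $(1,0,\dots,0)$, the subgroup of translations $r$ with $\sum_m\binom{i}{m}r_{m+1}\in\mathbb{Z}$ for all $i$ is trivial, whereas in the paper's coordinates $(\{\zeta n\},\dots,\{\zeta n^d\})$ the analogous group $\mathfrak{R}$ of \emph{stable} vectors is a nontrivial finite group (e.g.\ $(x,y)\sim(x+\tfrac12,y)$ for $d=2$), and the paper must divide the cell count by $\#\mathfrak{R}$. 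However, the two places you yourself flag as obstacles are exactly where the paper's technical work lies, and you do not supply the ideas that close them.

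First gap: the identity $\mathtt{p}_{\mathbf{w}}(k)=\#\{\text{cells}\}$ requires proving that, for all large $k$, every atom of $\bigvee_{i<k}T^{-i}\mathcal{P}$ with nonempty interior is a \emph{single} connected cell, i.e.\ that points in distinct cells are separated by the coding uniformly and in bounded time. This is not automatic even when the stable subgroup is trivial. The paper proves it in three steps: Lemma \ref{lestable} shows that for a non-stable difference vector the last coordinate of $M^k r$ gets arbitrarily close to $\tfrac12$ modulo $1$ (Weyl), so the two orbits eventually lie in different slabs --- this is where $b\geqslant 5$ enters, since every slab has width at most $\log_b 2<\tfrac12$; Lemma \ref{lecomp} upgrades this to uniform separation within $N(\delta)$ steps in both time directions by a compactness argument; and a Lipschitz estimate then shows each atom splits into exactly $\#\mathfrak{R}$ convex cells. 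You would need the analogous expansiveness statement for $T$; asserting it as something ``to be justified'' leaves the reduction from factor counting to cell counting unproved.

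Second gap: you give no mechanism for why the degeneracies of the arrangement contribute only a correction that is eventually affine in $k$, which is what makes $N(k)$ a genuine polynomial rather than $N'(k)$ plus an uncontrolled error. The paper's mechanism is arithmetic: Lemma \ref{le:intersect}\texttt{(i)} shows that for each choice of $d$ time indices only finitely many points of $\mathbb{T}^d$ lie on all $d$ surfaces (invertibility of an integer Vandermonde-type matrix), and part \texttt{(ii)} shows --- by expanding $1$, $\zeta$ and the $\log_b t$ in a $\mathbb{Q}$-basis and applying the Lagrange-interpolation bound of Lemma \ref{le:pol} to a monic degree-$d$ polynomial taking boundedly many values at $d+1$ integers --- that a point lying on more than $d$ surfaces can only do so for time indices confined to a window of length bounded by a constant depending only on $a$, $b$, $d$. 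Hence singular points lie on finitely many $f$-orbits with bounded multiplicity, their number grows affinely in $k$, and $N'(k)-N(k)$ is eventually affine. Without this input, your statement that the coincidences ``contribute in a way that stabilizes'' is an assertion of the theorem's hardest point, not a proof of it.
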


\medskip

In \cite{30,31}, a similar following result is proved.
 
\medskip

\begin{theorem}
Let \(\textsc{Q}(n)\) be a polynomial with an irrational leading coefficient. Let \(\mathbf{w}_n\) be the first digit after the decimal point in the binary expansion of \(\textsc{Q}(n)\). Then there exists a polynomial \(\textsc{P}(k)\), depending only on \(\deg(\textsc{P})\), such that \(\textsc{P}(k) = \mathtt{p}_{\mathbf{w}}(k)\) for all sufficiently large \(k\).
\end{theorem}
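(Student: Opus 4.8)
The plan is to reduce the statement to the dynamical coding problem underlying Theorem~\ref{main} and then run the same chamber-counting argument. Write $d=\deg\textsc{Q}$ and $\textsc{Q}(n)=\alpha_d n^d+\dots+\alpha_0$ with $\alpha_d$ irrational. The first binary digit after the point of $\textsc{Q}(n)$ is $\lfloor 2\{\textsc{Q}(n)\}\rfloor$, so $\mathbf{w}_n$ is determined solely by which of the two arcs $[0,\tfrac12)$ and $[\tfrac12,1)$ of $\mathbb{R}/\mathbb{Z}$ contains $\{\textsc{Q}(n)\}$; the base-$2$ hypothesis enters only through the two cut points $\{0,\tfrac12\}$. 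I would realize this polynomial phase as an orbit of a unipotent map: for the affine map $T(x_1,\dots,x_d)=(x_1+\alpha,\,x_2+x_1,\dots,x_d+x_{d-1})$ on $\mathbb{T}^d$, the last coordinate of $T^n(x)$ equals $x_d+nx_{d-1}+\binom n2 x_{d-2}+\dots+\binom nd\alpha$, a degree-$d$ polynomial in $n$ with leading coefficient $\alpha/d!$; choosing $\alpha=d!\,\alpha_d$ and a suitable base point $x^0$ makes it equal to $\textsc{Q}(n)\bmod 1$ for every $n$. Since $\alpha$ is irrational, the classical theory of such skew products (the ``unipotent dynamics on a torus'' of the abstract) shows the orbit $\{T^n x^0\}$ is minimal and uniquely ergodic, hence equidistributed for Haar measure on all of $\mathbb{T}^d$, so every positive-measure atom of every finite partition is eventually visited.

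Next I would translate factor complexity into chamber counting. Pull the two-arc partition back to the slab partition $\mathcal{P}$ of $\mathbb{T}^d$ into $\{x_d\in[0,\tfrac12)\}$ and $\{x_d\in[\tfrac12,1)\}$. The length-$k$ factor read at position $n$ records which atom of the refined partition $\bigvee_{i=0}^{k-1}T^{-i}\mathcal{P}$ contains $T^n x^0$, so by the equidistribution above $\mathtt{p}_{\mathbf{w}}(k)$ equals the number of positive-measure atoms (chambers) of this refinement. Each $T^{-i}\mathcal{P}$ is cut out by the two parallel rational hypertori $\{(T^i\cdot)_d\equiv 0\}$ and $\{(T^i\cdot)_d\equiv\tfrac12\}$, whose common integer normal is the binomial vector $\nu_i=(\binom{i}{d-1},\dots,\binom i1,1)$; the refinement is therefore the arrangement on $\mathbb{T}^d$ of $2k$ hypertori whose directions trace the moment curve $i\mapsto\nu_i$.

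The combinatorial core is to count the chambers of this toric arrangement exactly. The intersection flats of codimension $m$ are indexed by $m$-subsets of the directions, and the number of connected components of each flat is a lattice-index (gcd-of-minors) factor of the chosen normals; at the top codimension $d$, the families $i_1<\dots<i_d$ meet transversally in $\bigl|\det(\nu_{i_1},\dots,\nu_{i_d})\bigr|$ points per choice of the $2^d$ levels, and this determinant is the Vandermonde product $\prod_{p<q}(i_q-i_p)$. Hence the vertex count is $2^d\sum_{0\le i_1<\dots<i_d\le k-1}\prod_{p<q}(i_q-i_p)$, and since a polynomial summed over a dilating lattice simplex is eventually polynomial in the dilation parameter (Faulhaber/Ehrhart), this is a polynomial in $k$ of degree $d+\binom d2=d(d+1)/2$. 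Feeding the flat counts of all codimensions into a toric, Zaslavsky-type Euler-characteristic formula then expresses $\mathtt{p}_{\mathbf{w}}(k)$ as a polynomial in $k$ of that degree, with leading term depending only on $d$.

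The hard part will be exactness rather than order of growth. I must show the binomial-normal arrangement is in sufficiently general position that no accidental high-order incidences occur --- or, where they do, that inclusion--exclusion still returns an exactly polynomial count --- so that the Euler-characteristic bookkeeping gives an equality, not merely an estimate. I also have to reconcile ``chambers'' with ``visited atoms'': contributions from lower-dimensional atoms where the orbit lands exactly on a cut, i.e.\ where $\textsc{Q}(n)\in\tfrac12\mathbb{Z}$, must be shown to affect only finitely many short factors, and I must pin down the threshold beyond which unique ergodicity guarantees that every positive-measure chamber is actually realized by the orbit. These are exactly the phenomena that the ``for all sufficiently large $k$'' clause is meant to absorb.
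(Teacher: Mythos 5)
First, a framing point: the paper does not actually prove this statement itself --- it quotes it from \cite{30,31} as a companion result, and the machinery in Section~\ref{proof_main} is developed for Theorem~\ref{main}. Your proposal is therefore a blind transfer of the paper's own method (unipotent skew product, pullback of a slab partition, Vandermonde chamber counting) to the binary setting, and the skeleton is right: the realization of $\textsc{Q}(n)\bmod 1$ as the last coordinate of a Furstenberg skew product, the $2k$ hypertori with binomial normals, the determinant $\prod_{p<q}(i_{q}-i_{p})$, and the degree count $d+\binom d2=d(d+1)/2$ all match the paper's computation. But there is a genuine gap at the step where you assert that $\mathtt{p}_{\mathbf{w}}(k)$ ``equals the number of positive-measure atoms (chambers) of this refinement'': atoms and chambers are not the same thing, and for $d\geqslant 2$ they provably differ. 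The cylinder set of a factor $\textsc{\textbf{u}}$ is in general a \emph{disconnected} union of several chambers carrying the same digit word, because the torus admits nonzero translations $r$ with $\big(M^k(r)\big)^{(d)}\in\mathbb{Z}$ for all $k$ (the paper's \emph{stable vectors}; e.g.\ for $d=2$ the point $(x,y)$ and $(x+\tfrac12,y)$ generate identical words, as noted after Lemma~\ref{lestable}). Consequently $\mathtt{p}_{\mathbf{w}}(k)=\#\mathcal{M}_k/\#\mathfrak{R}$, not $\#\mathcal{M}_k$; your chamber count $2^d\sum\prod(i_q-i_p)$ overshoots by the constant factor $\#\mathfrak{R}$, and --- worse for exactness --- concluding that the quotient is itself eventually a polynomial requires proving that \emph{every} atom consists of \emph{exactly} $\#\mathfrak{R}$ chambers once $k$ is large. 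That is precisely the content of the paper's Lemmas~\ref{lestable}, \ref{lecomp} and the partition lemma following them, none of which appears in your proposal.

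Two further points where the transfer is not verbatim. First, the separation argument behind Lemma~\ref{lestable} (Case II) uses that each region $U_t$ has last-coordinate projection strictly shorter than $1/2$ --- this is exactly why the paper assumes $b\geqslant 5$. In your binary setting the two slabs have length exactly $1/2$, so you must separately handle translations $r$ whose last-coordinate orbit lies in $\tfrac12\mathbb{Z}\setminus\mathbb{Z}$: such $r$ map the cut set to itself yet \emph{flip} the digit, so they separate words rather than preserve them (a difference of exactly $1/2$ always changes the slab). This is fixable, but it changes which subgroup you quotient by and must be argued, not inherited. Second, your appeal to a ``Zaslavsky-type Euler-characteristic formula'' with a hope that accidental incidences do not occur gives only the growth order; the paper obtains \emph{exact} eventual polynomiality by showing (Lemma~\ref{le:pol} and Lemma~\ref{le:intersect}\texttt{(ii)}) that points lying on more than $d$ of the surfaces have their index sets $I_x$ of uniformly bounded diameter, whence the singular correction $N'(k)-N(k)$ is eventually affine. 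You correctly flag this as ``the hard part,'' but the proposal contains no mechanism for it, and without the bounded-spread lemma the inclusion--exclusion bookkeeping does not close. (Minor items: the orbit does hit the cuts, but only for at most $d$ values of $n$, since $2\textsc{Q}(n)\in\mathbb{Z}$ at $d+1$ integer points would force a rational leading coefficient; and the paper's half-open-slab perturbation trick shows every realized factor has a cylinder with nonempty interior, which disposes of your boundary-atom worry more cleanly than a threshold argument.)
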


\textbf{
Strategy:}
   moving to the unipotent dynamics on a torus and counting the number of regions into which the torus is divided by several families of parallel hyperplanes.
An additional complication of our problem is that the hyperplanes are not in a general position.

 \section{Proof of theorem~\ref{main}}
\label{proof_main}

\subsection{Equidistributed sequences}
A sequence of real numbers is said to be {\it equidistributed} or {\it uniformly distributed} if the proportion of terms that fall into any sub-interval is proportional to the length of this interval \cite{32},
i.e.,
a sequence $(s_i)_{i \in{\mathbb{N}_{>0}}}$ of real numbers is said to be equidistributed on a non-degenerate interval $[a,b]$ if for any sub-interval $[c,d]$ of $[a,b]$ we have
\begin{align*}
    \lim_{n\to \infty} \frac{\# \left(\{s_1, ..., s_n\} \cap [c,d]\right)  }{n} = \frac{d-c}{b-a}.
\end{align*}

\medskip

A sequence $(s_i)_{i \in{\mathbb{N}_{>0}}}$ of real numbers is said to be \textit{equidistributed modulo $1$} or \textit{uniformly distributed modulo $1$} if the sequence of fractional parts of $a_n$  is equidistributed in the interval $[0, 1]$
(see for instance \cite{25}).

\medskip

Let $X$ be a topological space with measure 
$\mu$.
We call a sequence $(a_n)_{n \in{\mathbb{N}_{>0}}} \in X$
{\it equidistributed}
if the proportion of terms that fall into any open set
$U \subset X$ tends to 
$\frac{\mu(U)}{\mu(X)}$
 \cite{52}.

Let $\mathbb{T}^d:=[0,1)^d$ be the torus $\mathbb{R}^d /\mathbb{Z}^d$.
Denote by $\rho(x,y)$ both the Euclidean metric in $\mathbb{R}^d$ and the locally Euclidean metric in $\mathbb{T}^d$, and we consider it with the Lebesgue measure.

We now proceed by introducing Weyl's theorem.

\begin{theorem}\cite{33}\label{thwe}
	Let  $\textsc{P}(t)$ be a be a polynomial
	with at least one irrational coefficient. 
	Then the sequence of fractional parts 
	$(\{\textsc{P}(i)\})_{i \in \mathbb{N}_{>0}}$
	is equidistributed (and in particular dense) in $\mathbb{T}^1$.
	
\end{theorem}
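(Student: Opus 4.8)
The plan is to reduce the statement to an estimate on exponential sums via Weyl's criterion, and then to establish that estimate by induction on the degree using the van der Corput differencing method.

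First I would invoke Weyl's criterion: the sequence $(\{\textsc{P}(n)\})_{n \geqslant 1}$ is equidistributed in $\mathbb{T}^1$ if and only if, for every nonzero integer $h$,
\[
\frac{1}{N}\sum_{n=1}^{N} e^{2\pi i h \textsc{P}(n)} \longrightarrow 0 \qquad (N \to \infty).
\]
Writing $e(x):=e^{2\pi i x}$ and replacing $\textsc{P}$ by $h\textsc{P}$ (which again has an irrational coefficient in every degree where $\textsc{P}$ does, since $h\in\mathbb{Z}\setminus\{0\}$), it suffices to prove that $\frac{1}{N}\sum_{n=1}^{N} e(\textsc{P}(n)) \to 0$ for every polynomial $\textsc{P}$ possessing at least one irrational coefficient of positive degree.

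Second, I would reduce to the case where the \emph{leading} coefficient is irrational. Let $m\in\{1,\dots,d\}$ be the largest index for which the coefficient $a_m$ of $t^m$ is irrational, and let $Q$ be a common denominator of the rational coefficients $a_{m+1},\dots,a_d$. Splitting $n$ into residue classes $n=Q\ell+r$, one checks that each term $a_j(Q\ell+r)^j$ with $j>m$ is, modulo $1$, a constant independent of $\ell$, because $(Q\ell+r)^j\equiv r^j \pmod Q$. Hence $\textsc{P}(Q\ell+r)$ agrees modulo $1$ with a polynomial in $\ell$ of degree $m$ whose leading coefficient $a_m Q^m$ is irrational. Since a finite union of equidistributed subsequences is equidistributed, it is enough to treat polynomials with irrational leading coefficient.

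Third --- the heart of the argument --- I would prove the leading-coefficient-irrational case by induction on the degree $d$. For $d=1$, $\textsc{P}(t)=\alpha t+\beta$ with $\alpha$ irrational, and the exponential sum is geometric: $\bigl|\sum_{n=1}^{N} e(\alpha n)\bigr|\leqslant 2\,|e(\alpha)-1|^{-1}=|\sin(\pi\alpha)|^{-1}<\infty$, so the normalized sum tends to $0$. For the inductive step I would use van der Corput's differencing inequality, which bounds $\bigl|\tfrac1N\sum_{n\leqslant N} e(\textsc{P}(n))\bigr|^{2}$ by an average, over shifts $h$, of the correlation sums $\tfrac1N\sum_{n} e\bigl(\textsc{P}(n+h)-\textsc{P}(n)\bigr)$. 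The difference polynomial $\Delta_h\textsc{P}(t):=\textsc{P}(t+h)-\textsc{P}(t)$ has degree $d-1$ and leading coefficient $d\,a_d\,h$, which is irrational for every $h\geqslant 1$; by the induction hypothesis each correlation sum is $o(N)$, and van der Corput's inequality then forces $\tfrac1N\sum_{n\leqslant N} e(\textsc{P}(n))\to 0$.

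The main obstacle I anticipate is the van der Corput inequality itself: deriving, via Cauchy--Schwarz and a rearrangement of the double sum over $n$ and the shift, the quantitative estimate that controls the mean of $e(\textsc{P}(n))$ by the averaged correlations, and then taking the two limits (first $N\to\infty$, then the number of shifts $\to\infty$) in the correct order. A secondary point requiring care is the bookkeeping in the residue-class reduction, ensuring that the hypothesis ``some positive-degree coefficient is irrational'' is correctly transferred to ``the leading coefficient is irrational'' before the induction is applied; the delicate case is $m=1$ with $d>1$, where differencing alone would only produce an irrational constant term, and it is precisely the reduction step that repairs this.
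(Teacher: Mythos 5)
Your argument is correct, and in fact the paper offers no proof of this statement at all: it is quoted directly from Weyl's 1916 paper (reference [33]), and the classical proof found there and in standard references such as Kuipers--Niederreiter is essentially the one you give --- Weyl's criterion, reduction to the case of an irrational leading coefficient by splitting $n$ into residue classes modulo a common denominator $Q$ of the higher rational coefficients, and induction on the degree via van der Corput (Weyl) differencing, with the geometric-sum bound $\bigl|\sum_{n=1}^{N} e(\alpha n)\bigr| \leqslant |\sin(\pi\alpha)|^{-1}$ as the base case. One point you handle that deserves to be made explicit: as literally stated in the paper, with the hypothesis ``at least one irrational coefficient,'' the theorem is false, since a polynomial whose only irrational coefficient is the constant term (e.g.\ $\textsc{P}(t) = \tfrac{t}{2} + \sqrt{2}$) has fractional parts supported on finitely many points. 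Your formulation --- an irrational coefficient in some positive degree --- is the correct hypothesis, and your closing remark about the case $m=1$, $d>1$ identifies precisely why the residue-class reduction is not optional: differencing a polynomial whose only irrational coefficient sits in degree one pushes the irrationality into the constant term, where the induction hypothesis (in the weak form) no longer applies, so the induction must be run on the leading-coefficient-irrational statement. No gaps; the only details left implicit (the exact form of van der Corput's inequality and the order of the limits $N \to \infty$ then $H \to \infty$) are standard and you flag them yourself.
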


\medskip

\medskip

\begin{theorem}\cite{34}\label{thci}
	Let  $\textsc{P}(t) = a_0t^d + \cdots + a_d$  be a polynomial
	with real coefficients, where $a_0$ is irrational. 
	Then the sequence of $d$-tuples 
	$$
	\left(\{\textsc{P}(n)\}, \{\textsc{P}(n+1)\}, \ldots, \{\textsc{P}(n+d-1)\}\right)
	$$
	is equidistributed in $\mathbb{T}^d$.
\end{theorem}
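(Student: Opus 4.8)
The plan is to derive equidistribution in $\mathbb{T}^d$ from Weyl's criterion, reducing the $d$-dimensional statement to the one-dimensional polynomial equidistribution already furnished by Theorem~\ref{thwe}. Recall that a sequence $(\mathbf{x}_n)$ in $\mathbb{T}^d$ is equidistributed if and only if for every nonzero integer vector $\mathbf{k}=(k_0,\ldots,k_{d-1})\in\mathbb{Z}^d$ the averages $\frac1N\sum_{n=1}^N e^{2\pi i\langle\mathbf{k},\mathbf{x}_n\rangle}$ tend to $0$. So first I would fix such a $\mathbf{k}$ and write $\mathbf{x}_n=(\{\textsc{P}(n)\},\ldots,\{\textsc{P}(n+d-1)\})$. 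Since each $k_j$ is an integer, the integer parts are invisible in the exponential, and $\langle\mathbf{k},\mathbf{x}_n\rangle$ may be replaced by the polynomial $Q(n):=\sum_{j=0}^{d-1}k_j\,\textsc{P}(n+j)$, giving $e^{2\pi i\langle\mathbf{k},\mathbf{x}_n\rangle}=e^{2\pi i Q(n)}$. Thus it suffices to show that $Q$ is equidistributed modulo $1$, for which, by Theorem~\ref{thwe}, it is enough to exhibit a coefficient of $Q$ in some positive degree that is irrational.

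The heart of the argument is therefore the claim that for every nonzero $\mathbf{k}$ the polynomial $Q$ has an irrational non-constant coefficient. I would analyze the coefficients of $Q$ through the moments $S_m:=\sum_{j=0}^{d-1}k_j\,j^{m}$ for $m=0,\ldots,d-1$. Expanding $\textsc{P}(n+j)=\sum_{i=0}^{d}a_i(n+j)^{d-i}$ and collecting powers of $n$, the coefficient of $n^{d-m}$ in $Q$ equals $\sum_{i=0}^{m}a_i\binom{d-i}{m-i}S_{m-i}$. The decisive structural observation is that the contributions of $a_1,\ldots,a_m$ involve only the lower moments $S_0,\ldots,S_{m-1}$. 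Letting $m^\ast$ be the smallest index with $S_{m^\ast}\neq0$, all lower moments vanish, so the coefficient of $n^{d-m^\ast}$ collapses to exactly $a_0\binom{d}{m^\ast}S_{m^\ast}$. Since $a_0$ is irrational and $\binom{d}{m^\ast}S_{m^\ast}$ is a nonzero integer, this coefficient is irrational.

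It then remains to guarantee that such an index $m^\ast$ exists and lies in the non-constant range $0\leqslant m^\ast\leqslant d-1$. Here I would invoke the Vandermonde structure: the linear map $\mathbf{k}\mapsto(S_0,\ldots,S_{d-1})$ has matrix $(j^{m})_{0\leqslant m,j\leqslant d-1}$, whose determinant is the Vandermonde product over the distinct nodes $0,1,\ldots,d-1$ and hence is nonzero. Consequently $\mathbf{k}\neq0$ forces $(S_0,\ldots,S_{d-1})\neq0$, a smallest index $m^\ast\leqslant d-1$ with $S_{m^\ast}\neq0$ indeed exists, and $d-m^\ast\geqslant1$ places the irrational coefficient in positive degree. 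Assembling the pieces, $Q$ is equidistributed modulo $1$ by Theorem~\ref{thwe}, whence $\frac1N\sum_{n=1}^N e^{2\pi iQ(n)}\to\int_0^1 e^{2\pi it}\,dt=0$, and Weyl's criterion delivers equidistribution of the $d$-tuples in $\mathbb{T}^d$.

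The main obstacle I anticipate is the cancellation bookkeeping of the second paragraph: one must check that at the minimal index $m^\ast$ every term carrying a possibly irrational $a_i$ with $i\geqslant1$ is annihilated by a vanishing lower moment, leaving the single clean term $a_0\binom{d}{m^\ast}S_{m^\ast}$. Were the lower moments not all zero, the various irrational $a_i$ could in principle conspire to cancel the $a_0$-contribution, and the whole reduction would fail; the minimality of $m^\ast$ is precisely what forecloses this possibility.
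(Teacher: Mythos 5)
Your proof is correct, but there is nothing in the paper to compare it against: Theorem~\ref{thci} is imported as a black box with the citation \cite{34}, and no internal proof is given. What you have written is, in effect, the classical argument behind the cited result. The reduction via Weyl's criterion is sound: for a nonzero $\mathbf{k}\in\mathbb{Z}^d$ the integer parts drop out of the exponential, the coefficient of $n^{d-m}$ in $Q(n)=\sum_{j=0}^{d-1}k_j\,\textsc{P}(n+j)$ is indeed $\sum_{i=0}^{m}a_i\binom{d-i}{m-i}S_{m-i}$ with $S_r=\sum_j k_j j^r$, and at the minimal index $m^\ast$ with $S_{m^\ast}\neq 0$ this collapses to $a_0\binom{d}{m^\ast}S_{m^\ast}$, an irrational number sitting in degree $d-m^\ast\geqslant 1$ because the Vandermonde matrix on the distinct nodes $0,1,\ldots,d-1$ forces $m^\ast\leqslant d-1$. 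Two remarks. First, you implicitly use the multidimensional Weyl criterion (equidistribution in $\mathbb{T}^d$ iff all nontrivial character averages vanish), which the paper never states; in a self-contained write-up you should cite it explicitly, since Theorem~\ref{thwe} as quoted is only one-dimensional. Second, your care about the \emph{positive-degree} location of the irrational coefficient is not pedantry: the paper's statement of Theorem~\ref{thwe} ("at least one irrational coefficient") is loose, as an irrational constant term alone gives nothing, and your argument correctly supplies what the genuine Weyl theorem requires. Your approach also yields the theorem for any $d$ consecutive shifts regardless of the rationality of $a_1,\ldots,a_d$, which is exactly the generality the paper needs when it applies Theorem~\ref{thci} to $P(x)=\frac{\zeta}{N}x^d$.
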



\subsection{Unipotent dynamics on a torus.}

The condition that the most significant  digit $\textsc{\textbf{w}}_n$ of the number $ a ^ {n ^ d} $ in base $b$ is to $ t $ can be written as
\begin{align*}
    \log_b(t) \leqslant \{\log_b(a)  \cdot n^d\} < \log_b{(t+1)}. 
\end{align*}

We denote $\log_b(a)$ by $\zeta$, and know that $\zeta \not\in \mathbb{Q}$.

\medskip

Consider a $d $-dimensional torus $\mathbb{T}^d$, and match the number $n$ to a point $v_n\in\mathbb{T}^d$ with coordinates
$$
\left(\{\zeta n\}, \{\zeta n^2\}, \cdots, \{\zeta n^d\}\right).
$$

It is easy to show that if $v_n$ has coordinates $(v_n^{(1)},\cdots, v_n^{(d)})$, then $v_{n+1}$ has coordinates 
$$	
\begin{array}{rcl} v_{n+1}^{(1)} & = & 
\{v_n^{(1)} + \zeta\}, \\
v_{n+1}^{(2)} & = & \{v_n^{(2)} + 2v_n^{(1)} +\zeta \}, \\
& \vdots &  \\
v_{n+1}^{(d)} & =& \{v_n^{(d)} + dv_n^{(d-1)}+\cdots + \binom{d}{k} v_n^{(d-k)} + \cdots   + d v_n^{(1)} + \zeta\}.\\
\end{array} 
$$

\medskip

It is straightforward to verify  that the map $f: \mathbb{T}^d \to \mathbb{T}^d$ defined by these formulas is a self-homeomorphism of $\mathbb{T}^d$.
Let us denote the corresponding affine transformation by $A:\mathbb{R}^d\to \mathbb{R}^d$.

Let us define {\it critical } subset of $S_t$ of $\mathbb{T}^d$ for all
$ t  \in  \llbracket 1,b-1\rrbracket$
as 
\begin{align*}
    S_t:=\{x \in \mathbb{T}^d: \, x^{(d)}=\log_b(t)\}.
\end{align*}
The critical sets divide $\mathbb{T}^d$ into $b-1$ regions $U_1,\cdots,U_{b-1}$; the set $U_t$  for all $ t  \in  \llbracket 1,b-1\rrbracket$ is defined as follow
\begin{align*}
    U_t:=\{ x \in \mathbb{T}^d: \, 
    \log_b(t)\leqslant
    x^{(d)}< \log_b(t+1)\}.
\end{align*}
Then the digit $ \textsc{\textbf{w}}_k$ is  there the index of $U_t$ where the point $v_k =  f^k (0, \cdots, 0) $ is located.

\medskip

We denote  the set $f^{-k} (S_t)$ by $S_{t,k}$.
The sets $S_{t,k}$ with $ t  \in  \llbracket 1,b-1\rrbracket$ and $ k  \in   \llbracket 1,n\rrbracket$ divide $\mathbb{T}^d$ into connected regions; the  set of these regions is denoted by $\mathcal{M}_n$.

Let $ \textsc{\textbf{u}} = u_1 \cdots u_n $ be a sequence of digits such that $|\textsc{\textbf{u}} |=n$, then we
assign
\begin{align*}
    U_{\textsc{\textbf{u}} }:=
U_{u_1} \cap f^{-1}(U_{u_2})\cap \cdots \cap f^{-(n-1)}(U_{u_n}).
\end{align*}

The set of all  $U_{\textsc{\textbf{u}} }$ where 
$|\textbf{u}|=n$ is denoted by $\mathcal{U}_n$.
It is
clear that $\mathcal{M}_n$ and $\mathcal{U}_n$ are partitions of $\mathbb{T}^d$, and $\mathcal{M}_n$ is a sub-partition of $\mathcal{U}_n$.

\newpage

\begin{lemma}
The sequence $(v_i)_{ i \in \mathbb{N}_{>0}}$ is dense in $\mathbb{T}^d$.
\end{lemma}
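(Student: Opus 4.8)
The plan is to prove the stronger statement that $(v_n)$ is equidistributed in $\mathbb{T}^d$; density is then immediate, since an equidistributed sequence meets every nonempty open set. The natural tool is Weyl's criterion: a sequence $(x_n)$ in $\mathbb{T}^d$ is equidistributed if and only if for every nonzero integer vector $\mathbf{h}=(h_1,\ldots,h_d)\in\mathbb{Z}^d$ one has $\frac{1}{N}\sum_{n=1}^N e^{2\pi i \langle \mathbf{h},\, x_n\rangle}\to 0$ as $N\to\infty$. So the whole problem reduces to estimating these exponential sums for our specific points $v_n=(\{\zeta n\},\{\zeta n^2\},\ldots,\{\zeta n^d\})$.

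First I would fix a nonzero $\mathbf{h}\in\mathbb{Z}^d$ and compute the inner product modulo $1$: since $\langle \mathbf{h},\, v_n\rangle \equiv \sum_{j=1}^d h_j\{\zeta n^j\} \equiv \zeta\sum_{j=1}^d h_j n^j \pmod 1$, the exponential $e^{2\pi i \langle \mathbf{h},\, v_n\rangle}$ equals $e^{2\pi i \textsc{P}(n)}$, where $\textsc{P}(t):=\zeta\sum_{j=1}^d h_j t^j$ is a polynomial with real coefficients and no constant term. Next, I would observe that $\textsc{P}$ has an irrational coefficient in a term of positive degree: if $j_0$ is the largest index with $h_{j_0}\neq 0$, then the coefficient of $t^{j_0}$ is $\zeta h_{j_0}$, which is irrational because $\zeta\notin\mathbb{Q}$ and $h_{j_0}$ is a nonzero integer.

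This is exactly the hypothesis of Weyl's equidistribution theorem (Theorem \ref{thwe}), so the one-dimensional sequence $(\{\textsc{P}(n)\})_n$ is equidistributed in $\mathbb{T}^1$. Applying the one-dimensional Weyl criterion to this sequence yields $\frac{1}{N}\sum_{n=1}^N e^{2\pi i \textsc{P}(n)}\to 0$, which is precisely the vanishing required above. Since $\mathbf{h}$ was arbitrary, the $d$-dimensional Weyl criterion is satisfied and $(v_n)$ is equidistributed, hence dense.

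I do not expect a serious obstacle here: the essential content is already packaged in Theorem \ref{thwe}, and the only point that needs care is the bookkeeping that collapses the $d$-dimensional inner product into a single real polynomial $\zeta\sum_j h_j t^j$, together with the verification that this polynomial retains an irrational positive-degree coefficient whenever $\mathbf{h}\neq 0$. One could alternatively try to invoke Theorem \ref{thci} directly, but that theorem concerns the consecutive shifts of a single polynomial rather than the tuple $(\{\zeta n\},\ldots,\{\zeta n^d\})$ of distinct monomials, so routing the argument through Weyl's criterion and Theorem \ref{thwe} is the cleaner path.
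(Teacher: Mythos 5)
Your proof is correct, but it takes a genuinely different route from the paper's. The paper does not go through the Weyl criterion at all: it first observes that the monomials $x, x^2, \ldots, x^d$ are rational linear combinations of the shifted powers $x^d, (x+1)^d, \ldots, (x+d-1)^d$ and $1$, clears denominators by a common $N$, and then applies Theorem~\ref{thci} to the single polynomial $\textsc{P}(x) = \frac{\zeta}{N}x^d$; the point $v_n$ is then the image of the equidistributed tuple $\left(\{\textsc{P}(n)\}, \ldots, \{\textsc{P}(n+d-1)\}\right)$ under a nonsingular integer affine map of the torus, which carries dense sets to dense sets. You instead prove equidistribution of $(v_n)$ directly: for each nonzero $\mathbf{h}\in\mathbb{Z}^d$ you collapse $\langle\mathbf{h},v_n\rangle$ modulo $1$ to the single polynomial $\zeta\sum_j h_j n^j$, note that its top nonzero coefficient $\zeta h_{j_0}$ is irrational and sits in a positive-degree term, and invoke Theorem~\ref{thwe} together with the Weyl criterion. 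Your route is essentially the classical proof of Weyl's multidimensional polynomial equidistribution theorem; it is self-contained modulo the Weyl criterion (a standard fact, though not stated in the paper), it yields the stronger conclusion of equidistribution, and it correctly attends to the caveat that the irrational coefficient must not be the constant term. The paper's route trades the Weyl criterion for the linear-algebra step about shifted powers and the (implicit) surjectivity of the induced integer map on $\mathbb{T}^d$, which lets it lean entirely on the already-quoted Theorem~\ref{thci}. Both arguments are sound; yours is arguably cleaner for a reader who accepts the Weyl criterion as standard.
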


\begin{proof}

It is not difficult to show that the monomials 
$x,x^2,\cdots, x^d$ are rational linear combinations of the polynomials
$$x^d, (x+1)^d,\cdots, (x+k-1)^d, \ \text{and} \  1.$$

Let $N$ be the common denominator of the coefficients of these combinations.
We apply Theorem \ref{thci} to the following polynomial to derive the lemma:
\[
P(x) = \frac{\zeta}{N} x^d.
\]
\end{proof}

\begin{lemma}
A finite sequence of digits $\textsc{\textbf{u}}$ is a factor of $\textsc{\textbf{w}}$ if and only if the set $U_{\textsc{\textbf{u}} }\subset \mathbb{T}^d$ has  a non-empty interior.
\end{lemma}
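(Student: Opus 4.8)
The plan is to prove both directions of the equivalence, using the density lemma just established together with the fact that the partition $\mathcal{U}_n$ is built from the sets $U_t$ whose boundaries lie on the critical sets $S_t$. Recall that $U_{\textsc{\textbf{u}}}$ is by definition the set of points $x$ whose itinerary $\bigl(\textsc{\textbf{w}}\text{-code of } x, f(x), \ldots, f^{n-1}(x)\bigr)$ spells out $\textsc{\textbf{u}}$, where the code of a point is the index $t$ of the region $U_t$ containing it. The point $v_k = f^k(0,\ldots,0)$ has code equal to $\textsc{\textbf{w}}_k$, so $\textsc{\textbf{u}}$ occurs in $\textsc{\textbf{w}}$ starting at position $k$ precisely when $v_k \in U_{\textsc{\textbf{u}}}$.

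For the easier direction, I would assume $U_{\textsc{\textbf{u}}}$ has non-empty interior and deduce that $\textsc{\textbf{u}}$ is a factor. Since the interior of $U_{\textsc{\textbf{u}}}$ is a non-empty open subset of $\mathbb{T}^d$, the density of $(v_i)$ guarantees some $v_k$ lands inside it; for that $k$ the itinerary reads $\textsc{\textbf{u}}$, so $\textsc{\textbf{u}}$ is a factor of $\textsc{\textbf{w}}$.

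The converse is where the real care is needed. Suppose $\textsc{\textbf{u}}$ is a factor, so $v_k \in U_{\textsc{\textbf{u}}}$ for some $k$. The obstacle is that $v_k$ could in principle sit on the boundary of $U_{\textsc{\textbf{u}}}$ — i.e., on one of the critical sets $S_{t,j}$ — in which case membership alone does not yield interior points. The plan is to rule this out by an irrationality argument: each $S_{t,j}$ is a finite union of affine hypersurfaces determined by the equation $x^{(d)} = \log_b(t)$ pulled back by the affine map $A^{j}$, and a direct computation shows that the last coordinate of $v_k = f^k(0,\ldots,0)$ equals $\{\zeta k^d\}$, while the pulled-back conditions equate certain polynomial expressions in $k$ (with leading coefficient an integer multiple of $\zeta$) to values in $\{\log_b(t)\}$. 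Because $\zeta = \log_b a$ is irrational and $a, b$ are multiplicatively independent, one checks that $\{\zeta\, m^d + \ldots\}$ can never exactly equal $\log_b(t)$ for the relevant $m, t$; hence $v_k$ avoids every critical set and lies in the open set $\operatorname{int}(U_{\textsc{\textbf{u}}})$, which is therefore non-empty.

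I expect the main difficulty to be this boundary-avoidance step, since it is precisely where the multiplicative-independence hypothesis enters and where one must verify that the affine preimages $S_{t,j}$ meet the orbit $(v_k)$ only in their (empty) interior relative to the orbit. A clean way to organize it is to observe that the set of "bad" points lying on some $S_{t,j}$ has measure zero and is contained in a countable union of proper affine subtori, and then argue that $\zeta \notin \mathbb{Q}$ forces the orbit off this null set at every index; alternatively one verifies directly that the defining equation of each $S_{t,j}$ evaluated along the orbit never vanishes, using that $\log_b t$ is irrational for $t \in \llbracket 1, b-1\rrbracket$ whenever $t$ is not a power of $b$. Once boundary avoidance is secured, both implications close immediately and the lemma follows.
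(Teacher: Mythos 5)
Your forward direction (non-empty interior $\Rightarrow$ factor, via density of the orbit) is exactly the paper's argument and is fine. The converse, however, rests entirely on your boundary-avoidance claim, and that claim is false. Multiplicative independence of $a$ and $b$ constrains the relation between $a$ and $b$, not between $a$ and the thresholds $\log_b(t)$: nothing forbids $a^{k^d}=t\cdot b^m$. Concretely, take $d=1$, $b=10$, $a=30$; then $\log_{10}(30)=1+\log_{10}(3)$ is irrational, so $a$ and $b$ are multiplicatively independent, yet $v_1=\{\zeta\cdot 1\}=\log_{10}(3)$ lies exactly on the critical set $S_3$ (the most significant digit of $30$ is $3$, realized on the boundary). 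The same can be arranged for any $d$ and $k$ by taking $a=(3\cdot 10^{5})^{1/k^{d}}$. Your fallback argument fails for the same reason: a measure-zero union of hypersurfaces can perfectly well contain points of a prescribed countable orbit, and irrationality of $\zeta$ or of $\log_b(t)$ does not prevent $\zeta k^d-\log_b(t)$ from being an integer.

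The paper avoids this issue rather than proving it cannot occur. The regions are half-open, $U_t=\{x:\log_b(t)\leqslant x^{(d)}<\log_b(t+1)\}$, so an orbit point may legitimately sit on the included lower face. The key observation is monotonicity of the dynamics under small positive perturbations: since $A(x+\Delta)-A(x)=M\Delta$ with $M$ lower triangular with positive integer entries, if $\Delta$ has small positive coordinates then so does $f(x+\Delta)-f(x)$. Hence, perturbing $v_k$ by a small vector with positive coordinates keeps every iterate $f^i(v_k+\Delta)$ in the same half-open region $U_{u_i}$ as $f^i(v_k)$, even when $f^i(v_k)$ lies on a critical set; this produces a small open box inside $U_{\textsc{\textbf{u}}}$ when $\textsc{\textbf{u}}$ is a prefix of $\textsc{\textbf{w}}$, and the case of a general factor $u_2$ follows by writing a prefix as $u_1u_2$ and using that $f^{|u_1|}(U_{u_1u_2})\subset U_{u_2}$ with $f$ a homeomorphism. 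You need to replace your boundary-avoidance step with an argument of this kind.
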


\begin{proof}
 Definition of $U_{\textsc{\textbf{u}} }$  implies that  $\textsc{\textbf{u}}$ is a factor of $\textsc{\textbf{w}}$ if and only if
$v_k \in U_{\textsc{\textbf{u}} }$ for some $k$.
Since the sequence $(v_k)_{ k\in \mathbb{N}_{>0}}$ is dense in $\mathbb{T}^d$, any subset of $\mathbb{T}^d$ with non-empty interior contains an element of this sequence. 

On the other hand, let $\textsc{\textbf{u}} = u_0\cdots u_n$ be a prefix of $\textsc{\textbf{w}}$, 
afterwards $f^k(0)\in U_{u_k}$ for all $  k  \in  \llbracket 0,n\rrbracket$. 

If $\Delta\in \mathbb{R}_{>0}^d$ is a vector with small enough positive coefficients, then $f(x+\Delta) - f(x)$ is  a small vector with positive coordinates, too (here we interpret $\mathbb{T}^d$ as an abelian group).
Hence, $U_{\textsc{\textbf{u}} }$ contains a small open ball.
Now let $\textsc{\textbf{u}}  = u_1u_2$. Since
\begin{align*}
    f^{|u_1|}(U_{u_1u_2})\subset U_{u_2},
\end{align*}
$U_{u_2}$
has non-empty interior, too.
\end{proof}

\medskip

Let $x,y\in \mathbb{T}^d$. We define an equivalence relation  $x\sim_ky$ if for any
$  i  \in  \llbracket 0,k-1\rrbracket$ the points $f^k(x)$ and $f^k(y)$ belong to the closure of the same region $U_t$ (i.e., $x$ and $y$ belong to the closure of the same $U$ from $\mathcal{U}_k$). 
We say that
 $x\sim y$ whenever $x\sim_ky$ for all $k\in \mathbb{N}_{>0}$.
If $x,y\in \mathbb{R}^d$, we say that $x\sim_ky$ if $x'\sim_ky'$ where $x'$ and $y'$ are projections under the factor mapping 
\begin{align*}
    \pi:\mathbb{R}^d &\to \mathbb{T}^d,\\ \pi(x^{(1)},\cdots, x^{(d)}) & = \left(\{x^{(1)}\},\cdots, \{x^{(d)}\}\right).
\end{align*}

For instance, if $d = 2$, then for any $(x,y)\in \mathbb{T}^2$
we have $$(x,y)\sim \left(x+\frac{1}{2}, y\right).$$

Let $x,y \in \mathbb{R}^d$. 
It is easy to see that $A(y) - A(x) = M(y - x)$,
where

\[
M = 
\begin{bmatrix} 
1 & 0 & 0 &  0 & \cdots & 0 \\
2 & 1 & 0 & 0 & \cdots& 0 \\
3 & 3 & 1 & 0 & \cdots& 0 \\
\vdots &  & &\ddots & & \vdots\\
\vdots &  & & & \ddots & \vdots\\
d & \binom{d}{2}  & \dots &  \binom{d}{2} & d & 1 \\
\end{bmatrix}.
\]

We call a vector $r\in \mathbb{R}^d$ {\it stable} when for any $k\in \mathbb{N}_{>0}$ the last coordinate of $M^k(r)$ is integer. 

\begin{lemma}
The set of all stable vectors forms a discrete subgroup in  $\mathbb{R}^d$.
\end{lemma}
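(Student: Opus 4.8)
The plan is to separate the two assertions—that the set $L$ of stable vectors is a subgroup and that it is discrete—since the first is immediate and the second carries all the content. Closure under addition and negation is clear: for each fixed $k$ the map $r \mapsto (M^k r)^{(d)}$ is linear, so if the last coordinate of $M^k r_1$ and of $M^k r_2$ lies in $\mathbb{Z}$ for every $k \in \mathbb{N}_{>0}$, then so does that of $M^k(r_1 - r_2)$; and $0 \in L$. Writing $\ell$ for the functional $v \mapsto v^{(d)}$ extracting the last coordinate and $c_k := \ell \circ M^k \in (\mathbb{R}^d)^*$, stability of $r$ reads $c_k(r) \in \mathbb{Z}$ for all $k \geq 1$. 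Thus $L = \{r : c_k(r) \in \mathbb{Z} \text{ for all } k \geq 1\}$, and discreteness will follow once I exhibit finitely many of the $c_k$ that already confine $r$ to a lattice.

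The heart of the argument is therefore the claim that the covectors $c_1, \ldots, c_d$ are linearly independent—equivalently, that they span the dual $(\mathbb{R}^d)^*$. Here I would exploit that $M$ is unipotent: writing $M = I + N$ with $N$ strictly lower triangular (so $N^d = 0$), the binomial theorem gives $M^k = \sum_{j=0}^{d-1} \binom{k}{j} N^j$, hence $c_k = \sum_{j=0}^{d-1} \binom{k}{j}\, (\ell \circ N^j)$. I would first check that the covectors $\ell \circ N^0, \ldots, \ell \circ N^{d-1}$ are linearly independent. The key structural fact is that every subdiagonal entry of $M$ is nonzero, namely $M_{i,i-1} = \binom{i}{1} = i$; since $N$ is strictly lower triangular, the row vector $\ell \circ N^j$ (the last row of $N^j$) is supported on coordinates $1, \ldots, d-j$, and its entry in position $d-j$ equals $\prod_{i=d-j+1}^{d} i \neq 0$. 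As $j$ ranges over $0, \ldots, d-1$ the rightmost nonzero position $d-j$ is distinct, which puts these covectors in echelon form and yields their independence.

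To pass from the $\ell \circ N^j$ to the $c_k$, I would invoke that the change-of-basis matrix $\big(\binom{k}{j}\big)$ with $1 \leq k \leq d$ and $0 \leq j \leq d-1$ is invertible: its rows are the values at the distinct points $k = 1, \ldots, d$ of the polynomials $k \mapsto \binom{k}{j}$, which have degrees $0, 1, \ldots, d-1$ and hence form a basis of the space of polynomials of degree $< d$; the matrix is thus a generalized Vandermonde matrix and is nonsingular. Consequently $c_1, \ldots, c_d$ span $(\mathbb{R}^d)^*$, and the linear map $\Phi(r) = (c_1(r), \ldots, c_d(r))$ is an isomorphism of $\mathbb{R}^d$. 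Since $L \subseteq \Phi^{-1}(\mathbb{Z}^d)$ and $\Phi^{-1}(\mathbb{Z}^d)$ is a full-rank lattice, a neighborhood of $0$ meeting $\Phi^{-1}(\mathbb{Z}^d)$ only in $0$ also meets $L$ only in $0$; as $L$ is a subgroup this makes it discrete, completing the proof.

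I expect the spanning/independence step to be the main obstacle: the functionals $c_k$ are manifestly not in general position, and the reason finitely many of them still recover the full dual is precisely the nonvanishing of the subdiagonal of $M$ together with the invertibility of the Pascal coefficient matrix. Everything after that is soft point-set topology.
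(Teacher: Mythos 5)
Your proof is correct, and it reaches the same underlying fact as the paper --- that the first $d$ integrality constraints already pin $r$ down --- but by a noticeably different route. The paper writes $\bigl(M^k r\bigr)^{(d)} = r_1\textsc{P}_{d,1}(k)+\cdots+r_{d-1}\textsc{P}_{d,d-1}(k)+r_d$ with $\deg \textsc{P}_{d,j}=d-j$, then argues quantitatively: if every $|r_i|$ is below an explicit threshold $1/(dC_d)$, the integer values $\bigl(M^k r\bigr)^{(d)}$ for $k\in\llbracket 1,d+1\rrbracket$ must all vanish, and a polynomial of degree at most $d$ vanishing at $d+1$ points is identically zero, whence $r=0$ by the distinct degrees of the $\textsc{P}_{d,j}$. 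You instead work dually: decomposing $M=I+N$ with $N$ nilpotent, you put the covectors $\ell\circ N^j$ in echelon form via the nonvanishing subdiagonal $M_{i,i-1}=i$, invert the Pascal coefficient matrix $\bigl(\binom{k}{j}\bigr)$ to conclude $c_1,\dots,c_d$ span $(\mathbb{R}^d)^*$, and deduce that the stable vectors sit inside the full-rank lattice $\Phi^{-1}(\mathbb{Z}^d)$. Your echelon-pivot computation $\prod_{i=d-j+1}^{d} i\neq 0$ is exactly the information carried by the paper's statement that $\textsc{P}_{d,j}$ has degree $d-j$, so the two arguments are linear-algebraically equivalent; what yours buys is the elimination of the $\varepsilon$-management around $C_d$ and a slightly stronger structural conclusion (containment in an explicit lattice rather than mere isolation of $0$), at the cost of the extra step verifying invertibility of the binomial change-of-basis matrix, which the paper's interpolation argument absorbs for free.
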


\begin{proof}
It is obvious that stable vectors form a subgroup in $\mathbb{R}^d$, accordingly we just need to prove that it is discrete.
Let $r = (r_1,\cdots, r_d)$.
It is not difficult to show by induction on $i$ that the $i$-coordinate 
\begin{align*}
    \left(M^k(r)\right)^{(i)} = r_1\textsc{P}_{i,1}(k) + \cdots + r_{i-1}\textsc{P}_{i,i-1}(k) + r_i,
\end{align*}
where $\textsc{P}_{i,j}$ is a polynomial of degree $(i-j)$ that does not depend on $r$.

\medskip

In particular,
\begin{align*}
    \left(M^k(r)\right)^{(d)} = r_1\textsc{P}_{d,1}(k) + \cdots + r_{d-1}\textsc{P}_{d,d-1}(k) + r_d.
\end{align*}

Let $C_d\in \mathbb{R}$ be such that $\textsc{P}_{d,j}(x) \leqslant C_d$ for all $j \in  \llbracket 1, d\rrbracket  $ and $x \in \llbracket 0, d+1\rrbracket$.
Suppose that for all $i$,
\begin{align*}
    |r_i|< \frac{1}{dC_d},
\end{align*}
and that $\left(M^k(r)\right)^{(d)} \in \mathbb{Z}$ for $  k  \in  \llbracket 1,d+1\rrbracket$.
Since 
\begin{align*}
  \left|\left(M^k(r)\right)^{(d)}\right| < \frac{d C_d}{d C_d} = 1,  
\end{align*}
 and $\left(M^k(r)\right)^{(d)} = 0$ for $  k  \in  \llbracket 1,d+1\rrbracket$.
Yet $\left(M^k(r)\right)^{(d)}$ is a polynomial in $k$ of degree $d$.
We conclude that 
\begin{align*}
    r_1 = r_2 = \cdots = r_d = 0.
\end{align*}
\end{proof}

\medskip

It is clear that for any stable vector $r$ it holds $v_1 \sim (v_1 + r)$.
We can also consider stable vectors as a finite subgroup in $\mathbb{T}^d$.

\medskip

\begin{lemma}\label{lestable}
Let $x,y\in \mathbb{T}^d$ be such that $y-x$ is not stable. 
Then $v_1 \not \sim v_2$.
\end{lemma}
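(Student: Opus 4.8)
The plan is to reduce the whole question to the behaviour of the \emph{last} coordinate along the forward orbit, and then to play the equidistribution of that coordinate against the nonzero drift produced by a non‑stable difference vector. Fix lifts $\tilde x,\tilde y\in\mathbb{R}^d$ of $x,y$ and set $r=\tilde y-\tilde x$, a lift of $y-x$; by hypothesis $r$ is not stable. Since $A^i(\tilde y)-A^i(\tilde x)=M^i r$, and the region $U_t$ containing a point depends only on its last coordinate, the last coordinates of $f^i(x)$ and $f^i(y)$ differ modulo $1$ by exactly
\[
\delta_i:=\bigl(M^i r\bigr)^{(d)}\bmod 1 .
\]
This difference is independent of the base point and of the $\zeta$-drift. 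Writing $g(i)=(M^i r)^{(d)}$, the formula for $(M^k r)^{(d)}$ obtained in the previous lemma shows that $g$ is a polynomial in $i$ of degree at most $d-1$, and the failure of stability means precisely that $g(i_0)\notin\mathbb{Z}$, i.e. $\delta_{i_0}\neq 0$, for some $i_0\ge 0$. To prove $x\not\sim y$ it suffices to exhibit a single index $i$ for which $f^i(x)$ and $f^i(y)$ lie in the closures of two different regions.

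The equidistribution input is as follows. From $A^i(\tilde x)^{(d)}=(M^i\tilde x)^{(d)}+\zeta i^d$ the last coordinate of the orbit is
\[
a_i:=\bigl(f^i(x)\bigr)^{(d)}=\zeta i^d+q(i)\bmod 1,
\]
with $q$ a real polynomial of degree at most $d-1$; as $\zeta$ is irrational, Theorem \ref{thwe} gives that $(a_i)$ is equidistributed in $\mathbb{T}^1$. I then study the planar sequence $(a_i,b_i)$ with $b_i=(f^i(y))^{(d)}=a_i+\delta_i\bmod 1$, splitting into two cases. If some positive-degree monomial of $g$ has an irrational coefficient, Weyl's criterion yields that $(a_i,b_i)$ is equidistributed in all of $\mathbb{T}^2$: the exponential-sum obstruction $m a_i+n b_i$ has leading term $(m+n)\zeta i^d$, irrational unless $m+n=0$, and the residual case $n=-m$ is killed by the irrational coefficient of $g$. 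If instead all positive-degree coefficients of $g$ are rational, then $\delta_i$ takes only finitely many values and is periodic in $i$; restricting to the residue class $i=Tm+s$ on which $\delta_i$ equals the fixed nonzero value $\delta:=\delta_{i_0}$, the sequence $a_i=\zeta(Tm+s)^d+\cdots$ still has irrational leading coefficient $\zeta T^d$, so it is equidistributed by Theorem \ref{thwe}. In either case the closure of $\{(a_i,b_i)\}$ contains a full diagonal line $L_\delta=\{(a,a+\delta):a\in\mathbb{T}^1\}$ with $\delta\not\equiv 0\pmod 1$.

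Finally I verify a purely geometric fact: the open set $W\subset\mathbb{T}^2$ of pairs whose two entries lie in different closed regions meets $L_\delta$. Suppose not, so that $a$ and $a+\delta$ share a closed region for every $a$. For $a$ outside the finite set of critical values and their $\delta$-translates, both points are interior to regions, and sharing a closed region forces them into the \emph{same} open region $U_t$; hence rotation by $\delta$ sends a full-measure subset of each $U_t$ into $U_t$, and by invariance of Lebesgue measure it maps the arc $U_t$ onto itself, which is impossible for a proper arc unless $\delta\equiv 0$. Thus $W\cap L_\delta\neq\varnothing$; since $W$ is open and $L_\delta$ lies in the closure of $\{(a_i,b_i)\}$, some pair $(a_i,b_i)$ already lies in $W$. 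For this $i$ the points $f^i(x)$ and $f^i(y)$ sit in distinct closed regions, so $x\not\sim_{i+1} y$ and therefore $x\not\sim y$.

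The main obstacle is exactly the possible failure of joint equidistribution of $(a_i,b_i)$ when $g$ carries only rational higher coefficients — the analytic shadow of the hyperplanes not being in general position noted in the strategy — which is why the argument must descend to an arithmetic progression of indices and re-establish equidistribution there. A secondary point demanding care is the boundary bookkeeping built into $\sim$: because that relation uses \emph{closures} of regions, the separation must be witnessed by points strictly inside two distinct regions, which is ensured by selecting $a_i$ in the nonempty open portion of $W\cap L_\delta$ rather than on a critical value.
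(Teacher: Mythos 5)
Your proof is correct and follows essentially the same route as the paper's: both reduce everything to the drift polynomial $g(i)=(M^i r)^{(d)}$ in the last coordinate, split into cases according to the rationality of its coefficients, invoke Weyl equidistribution of the last coordinate of the orbit (along an arithmetic progression in the rational case), and conclude that a nonzero drift must eventually put the two orbits into different regions $U_t$. The only noteworthy difference is that you organize the dichotomy around the non-constant coefficients of $g$ rather than around $r\in\mathbb{Q}^d$ versus $r\notin\mathbb{Q}^d$, which handles more cleanly the edge case where $g$ has rational non-constant coefficients but an irrational constant term (a case the paper's Case II glosses over, since Weyl gives nothing when the only irrational coefficient is the constant one), and you make explicit the final geometric separation step that the paper leaves implicit.
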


\begin{proof}
Let $r := y-x$.
Consider two cases:

\begin{itemize}
    \item 
 {\bf Case I:}  The vector $r\in \mathbb{Q}^d$.
    Then $M^N(r) \equiv r  \pmod{\mathbb{T}^d} $ for some $N$, and we can consider that  $r^{(d)}\not \in \mathbb{Z}$.
It is clear that $A^k(v_1)^{(d)} = \textsc{P}(k)$ where $\textsc{P}(k)$ is a polynomial of degree $d$ with leading coefficient $\log_b(a)$. 
We apply Theorem \ref{thwe} for the polynomial $\textsc{P}(Nx)$, the set of fractional parts $\{A^{Nk}(x)^{(d)}\}$ is dense in $\mathbb{T}^1$. 
Choose $k$ so that $f^{Nk}(x)$ and $f^{Nk}(x)+r$ are in different regions $U_t$.

\medskip

\item
{\bf Case II:} Not all $r_i\in \mathbb{Q}$.
 Suppose that $i$ is the smallest index such that $r^{(i)}\not\in \mathbb{Q}$. 
It can be shown that $M^k(r)^{(d)}$ is a polynomial in $k$ with an irrational coefficient of $k^{d-i}$. 
According to
 Theorem \ref{thwe}, we claim that   $\{M^k(r)^{(d)}\}$ can be arbitrarily close to $1/2$; 
in this case $f^{k}(x)$ and $f^k(y)$ are in different regions $U_t$, since each $U_i$ has the length of projection to the last coordinate shorter than 
$1/2 $ (maximal length $\log_b(2)$ for $U_1$).

\end{itemize}
\end{proof}

Stable vectors form a subgroup $\mathfrak{R}$ in $\mathbb{T}^d$.
 Lemma \ref{lestable} implies that each $\sim$ equivalence class of $\mathbb{T}^d$ contains exactly $\#\mathfrak{R}$ points.

\medskip

Denote by $\rho_{\mathfrak{R}}(\cdot,\cdot)$ the pseudo-metric in $\mathbb{T}^d$ defined by
$$
\rho_{\mathfrak{R}}(x,y):= \min_{r\in {\mathfrak{R}}} \rho(x,y-r).
$$ 

\medskip

\newpage

\begin{lemma}\label{lecomp}
For any $\delta>0$ there exists $N(\delta)$ such that
for any $x,y\in \mathbb{T}^d$ with $\rho_{\mathfrak{R}}(x,y) > \delta$:

\begin{enumerate}
	\item[\texttt{(i)}.] for some $0 \leqslant k_1 < N(\delta)$,  $f^{k_1}(x) \not \sim_1 f^{k_1}(y)$ (equivalently, $x\not \sim_k y$);
	\item[\texttt{(ii)}.] for some $0 \leqslant k_2 < N(\delta)$,  $f^{-k_2}(x) \not \sim_1 f^{-k_2}(y)$.
\end{enumerate} 
\end{lemma}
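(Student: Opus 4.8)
The plan is to upgrade the qualitative conclusion of Lemma~\ref{lestable} --- that a non-stable displacement eventually forces separation --- into a \emph{uniform} bound $N(\delta)$ by a compactness argument, and to obtain the backward statement \texttt{(ii)} by applying the same scheme to the homeomorphism $f^{-1}$. No new dynamical input beyond Lemma~\ref{lestable} (and its backward analogue) is needed; the whole point is to package it correctly.

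For \texttt{(i)}, for each $N\in\mathbb{N}_{>0}$ I set
\[
B_N:=\bigl\{(x,y)\in\mathbb{T}^d\times\mathbb{T}^d:\ f^k(x)\sim_1 f^k(y)\ \text{for all}\ 0\leqslant k<N\bigr\}.
\]
Since $f^k(x)\sim_1 f^k(y)$ holds exactly when both points lie in a common closed region $\overline{U_t}$, the single-time condition equals $\bigcup_{t=1}^{b-1}\bigl((f^k)^{-1}(\overline{U_t})\times(f^k)^{-1}(\overline{U_t})\bigr)$, a finite union of products of closed sets; hence every $B_N$ is closed and $B_0\supseteq B_1\supseteq\cdots$. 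Writing $K_\delta:=\{(x,y):\rho_{\mathfrak{R}}(x,y)\geqslant\delta\}$, which is compact because $\rho_{\mathfrak{R}}$ is continuous on the compact space $\mathbb{T}^d\times\mathbb{T}^d$, the sets $C_N:=K_\delta\cap B_N$ form a nested sequence of compact sets.

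I claim $\bigcap_{N}C_N=\emptyset$. Indeed $\bigcap_N B_N=\{(x,y):x\sim y\}$, and the remark following Lemma~\ref{lestable} identifies the $\sim$-classes with the cosets of $\mathfrak{R}$; thus $x\sim y$ forces $y-x\in\mathfrak{R}$, i.e.\ $\rho_{\mathfrak{R}}(x,y)=0<\delta$, so $\bigcap_N B_N$ is disjoint from $K_\delta$. A decreasing sequence of compact sets with empty intersection must contain an empty member, so $C_{N}=\emptyset$ for some $N$; I take $N(\delta)$ to be this $N$. Then any $(x,y)$ with $\rho_{\mathfrak{R}}(x,y)>\delta$ lies in $K_\delta$ but not in $B_{N(\delta)}$, which says precisely that $f^{k_1}(x)\not\sim_1 f^{k_1}(y)$ for some $0\leqslant k_1<N(\delta)$, proving \texttt{(i)}.

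Part \texttt{(ii)} is part \texttt{(i)} for $f^{-1}$, which is a unipotent affine self-homeomorphism of $\mathbb{T}^d$ of exactly the same shape (it corresponds to $n\mapsto n-1$, with integer unipotent linear part $M^{-1}$). The crucial point is that the stable subgroup is the same for $f$ and $f^{-1}$: the last coordinate of $M^k(y-x)$ is $g(k):=\sum_{j=1}^{d}\binom{d}{j}k^{d-j}(y-x)_j$, a polynomial of degree at most $d-1$ whose coefficients do not depend on the sign of $k$, and an integer-valued polynomial of degree $<d$ on $d$ consecutive integers is integer-valued on all of $\mathbb{Z}$; hence forward and backward stability coincide and the proof of Lemma~\ref{lestable} carries over with $k$ replaced by $-k$ (in Case~II the irrational leading coefficient of $g$ survives under $k\mapsto-k$, and in Case~I the rational orbit is still finite, so Theorem~\ref{thwe} applies as before). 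Consequently the backward $\sim$-classes are again the cosets of $\mathfrak{R}$, and rerunning the previous paragraph with $B_N^-:=\{(x,y):f^{-k}(x)\sim_1 f^{-k}(y)\ \text{for}\ 0\leqslant k<N\}$ yields a bound for \texttt{(ii)}; enlarging $N(\delta)$ to the larger of the two bounds serves both parts at once. The main obstacle is exactly this backward analogue of Lemma~\ref{lestable}: one must verify that non-stability is preserved under $k\mapsto-k$ and that the equidistribution input still applies to the reversed polynomial orbit. Once that symmetry is in place, both statements are formal consequences of the nested-compact-sets packaging, since all the genuine dynamics already resides in Lemma~\ref{lestable}.
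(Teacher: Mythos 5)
Your proposal is correct and follows essentially the same route as the paper: the paper also proves \texttt{(i)} by noting that the relations $\sim_k$ are closed and running a compactness argument to contradict Lemma~\ref{lestable}, and disposes of \texttt{(ii)} by observing that everything carries over to $f^{-1}$. Your write-up is simply a more explicit version (the nested-compact-sets packaging and the verification that stability is symmetric under $k\mapsto -k$ are details the paper leaves implicit).
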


\begin{proof}
\texttt{(i)}:
Assume the contrary.
All the relations $\sim_k$ define closed subsets of $\mathbb{T}^d\times \mathbb{T}^d$. 
By compactness argument we find two different points $x$
and $y$ such that $x \sim y$, yet $\rho(x,y) \geqslant \delta$, 
a
contradiction with Lemma \ref{lestable}. 

\medskip

\texttt{(ii)}:
To prove this note that all previous lemmas hold whenever we consider
$f^{-1}$ instead of $f$ if $f$ invertible.
\end{proof}

\begin{lemma}
There exists $N$ such that for all $n> N$, $\mathcal{M}_n$ is a partition of $\mathbb{T}^d$ into convex polyhedrons and every $U$ from $\mathcal{U}_n$ consists of exactly  $\#\mathfrak{R}$ regions from $\mathcal{M}_n$.
\end{lemma}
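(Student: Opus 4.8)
The plan is to work in the universal cover. Lift everything through $\pi\colon\mathbb{R}^d\to\mathbb{T}^d$, so that $S_{t,k}=f^{-k}(S_t)$ lifts to the family of parallel hyperplanes $\{z:(A^kz)^{(d)}=\log_b t+m,\ m\in\mathbb{Z}\}$. In these terms $\mathcal{M}_n$ is the projection of the region complex of a finite arrangement of $n$ parallel families, while $\mathcal{U}_n$ records only the \emph{slab} data (the symbolic itinerary), forgetting integer translates. I would first pin down the group $\mathfrak{R}$ of stable vectors. Since $(M^kr)^{(d)}$ is a polynomial in $k$ of degree $\le d-1$, a stable $r$ makes it integer-valued on $\mathbb{N}_{>0}$, hence (by finite differences) on all of $\mathbb{Z}$, so in particular $r^{(d)}\in\mathbb{Z}$. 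Consequently each translation $T_r$ with $r\in\mathfrak{R}$ fixes every $S_t$, and via $f^kT_r=T_{M^kr}f^k$ it fixes every $S_{t,k}$; thus $T_r$ preserves the arrangement and permutes $\mathcal{M}_n$ while fixing each $U_{\mathbf{u}}\in\mathcal{U}_n$.

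Next come convexity and the lower bound. Fix $\delta>0$ smaller than $\tfrac12$ and than half the least nonzero length in $\mathfrak{R}$. By Lemma~\ref{lecomp}\texttt{(i)}, for $n>N(\delta)$ any two points sharing the forward itinerary on $\llbracket 0,n-1\rrbracket$ satisfy $\rho_{\mathfrak{R}}(x,y)\le\delta$; since $\mathcal{M}_n$ refines $\mathcal{U}_n$, every region of $\mathcal{M}_n$ has $\rho_{\mathfrak{R}}$-diameter $\le\delta$, and being connected with the $\mathfrak{R}$-cosets more than $2\delta$ apart, it has genuine $\rho$-diameter $\le\delta$. Such a region lifts isometrically to a single cell $C_{\mathbf{m}}$, an intersection of slabs, hence a convex polyhedron; this proves the first assertion. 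Moreover the $\#\mathfrak{R}$ translates $\{x+r:r\in\mathfrak{R}\}$ of an interior point $x\in U_{\mathbf{u}}$ all lie in $U_{\mathbf{u}}$, and a nonzero $r$ cannot map a region of $\rho$-diameter $\le\delta$ to itself, so the action on regions is free; hence each $U_{\mathbf{u}}$ contains \emph{at least} $\#\mathfrak{R}$ regions.

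For the upper bound I would encode regions by integer vectors: a cell is $C_{\mathbf{m}}=\{z:\lfloor\textsc{P}_z(k)\rfloor=m_k,\ k\in\llbracket 0,n-1\rrbracket\}$, where $\textsc{P}_z(k):=(A^kz)^{(d)}$ is a degree-$d$ polynomial with leading coefficient $\zeta$, and two cells lie in the same region iff their index vectors differ by $\big((M^kp)^{(d)}\big)_k$ for some $p\in\mathbb{Z}^d$. Because $r\mapsto\big((M^kr)^{(d)}\big)_{k=0}^{n-1}$ is injective once $n\ge d$ (a polynomial of degree $\le d-1$ is determined by $d$ values) and $\mathbb{Z}^d\subset\mathfrak{R}$, the stable vectors furnish exactly $\#\mathfrak{R}$ distinct index-cosets modulo $\mathbb{Z}^d$. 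So it suffices to show that any two nonempty cells of a common itinerary differ by the index vector of a \emph{stable} vector; equivalently, writing $h:=\textsc{P}_z-\textsc{P}_{z'}$ (a real polynomial of degree $\le d-1$, since the $\zeta k^d$ terms cancel), that $h$ is integer-valued. This is where $b\ge 5$ enters: sharing the itinerary forces $\{\textsc{P}_z(k)\}$ and $\{\textsc{P}_{z'}(k)\}$ into a common interval of length $\le\log_b 2<\tfrac12$, so $c_k:=\lfloor\textsc{P}_z(k)\rfloor-\lfloor\textsc{P}_{z'}(k)\rfloor$ is the nearest integer to $h(k)$ and $|h(k)-c_k|<\log_b 2<\tfrac12$ for every $k\in\llbracket 0,n-1\rrbracket$.

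The main obstacle is exactly to upgrade ``$h$ is within $\log_b 2<\tfrac12$ of an integer at $n$ consecutive points'' to ``$h$ is integer-valued.'' This implication is false for an arbitrary polynomial (e.g.\ a nonzero constant below $\tfrac12$), and can hold only because the itinerary $\mathbf{u}$ is genuinely \emph{realized} by the dense orbit $(v_k)$. I therefore expect to combine nonemptiness of the cells with the density lemma and both parts of Lemma~\ref{lecomp}: if $z-z'$ were not stable then $\pi z\not\sim\pi z'$ by Lemma~\ref{lestable}, and part \texttt{(i)} (forward) rules out large $\rho_{\mathfrak{R}}$-distance while part \texttt{(ii)} (backward) exposes the eventual disagreement, which should force $h$ to leave the $(\log_b 2)$-neighborhood of $\mathbb{Z}$ already inside the window $\llbracket 0,n-1\rrbracket$, a contradiction. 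Once $h$ is integer-valued, $z-z'$ is stable, so every nonempty cell sits in one of the $\#\mathfrak{R}$ stable index-cosets; together with the lower bound, each $U_{\mathbf{u}}$ consists of exactly $\#\mathfrak{R}$ regions of $\mathcal{M}_n$.
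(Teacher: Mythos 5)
Your treatment of convexity and of the lower bound (each $U_{\mathbf{u}}$ contains at least $\#\mathfrak{R}$ regions) is sound and close in spirit to the paper: small $\rho_{\mathfrak{R}}$-diameter from Lemma \ref{lecomp}\texttt{(i)}, disjointness of the $\delta$-balls around the $\mathfrak{R}$-translates, and an isometric lift to a convex cell of the arrangement in $\mathbb{R}^d$. But the upper bound --- that $U_{\mathbf{u}}$ meets at most $\#\mathfrak{R}$ cells --- is the heart of the lemma, and there your argument stops at ``I therefore expect to combine \dots which should force \dots a contradiction.'' That is a genuine gap, not a routine verification: as you yourself observe, ``$h(k)$ is within $\log_b 2$ of an integer for $k\in\llbracket 0,n-1\rrbracket$'' does not imply $h$ is integer-valued, and in fact for arbitrary representatives $z,z'$ of two cells the polynomial $h=\textsc{P}_z-\textsc{P}_{z'}$ is \emph{never} literally integer-valued; the statement you actually need is that the floor-difference vector $(c_k)_k$ coincides with $\bigl((M^k r)^{(d)}\bigr)_k$ for a stable $r$, and no mechanism in your write-up produces that.

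The paper closes exactly this gap by a geometric argument you do not carry out. Having covered $U_{\mathbf{u}}$ by the $\#\mathfrak{R}$ balls $B(x_i,\delta)$, it shows that no surface $S_{t,k}$ can separate two points $y,z\in U_{\mathbf{u}}$ with $\rho(y,z)<\delta$: if the short segment from $y$ to $z$ crossed $S_{t,k}$, then since $\{q_y\},\{q_z\}$ lie in a common interval of length $\leqslant\log_b 2<\tfrac12$ the integer parts must differ, so $\rho\bigl(A^k(y),A^k(z)\bigr)>\tfrac12$; because $A$ is $\lambda$-Lipschitz there is an intermediate time $i$ with $K/(2\lambda)\leqslant\rho\bigl(A^i(y),A^i(z)\bigr)<K/2$, whence $\rho_{\mathfrak{R}}\bigl(f^i(y),f^i(z)\bigr)>\delta$, and then Lemma \ref{lecomp} applied \emph{at time $i$}, forward if $i\leqslant n/2$ and backward if $i>n/2$ (this is why $N=3N(\delta)$ and why part \texttt{(ii)} exists), produces a disagreement of itineraries inside the window $\llbracket 0,n\rrbracket$, contradicting $y\sim_n z$. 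The two ingredients missing from your proposal are precisely this Lipschitz intermediate-time step (which converts ``far apart at time $k$'' into ``moderately far apart at some controlled time $i$'') and the split into forward/backward applications of Lemma \ref{lecomp} depending on where $i$ sits in the window. Without them, your algebraic reformulation via index vectors has no way to rule out a non-stable coset being realized.
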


\begin{proof}
Let
$K$
be the length of the shortest stable vector.
Assume that
$M$
is a
$\lambda$-Lipschitz
transformation of the torus.
Suppose that
 $\delta < {K}/{4\lambda}$ and that $N = 3 \cdot N(\delta)$ (the notation used in  Lemma \ref{lecomp}).

Let $\textsc{\textbf{u}} $ be a word of length $n \geqslant N$ and  the interior of $U_{\textsc{\textbf{u}} }$ be non-empty.
Let $x\in U_{\textsc{\textbf{u}} }$, and let $x=x_1 x_2 \cdots x_{\#{\mathfrak{R}}}$ be all shifts of $x$ by a vector from ${\mathfrak{R}}$. 
Obviously, all $x_i\in U_{\textsc{\textbf{u}} }$. 
By definition of $N(\delta)$, we can see that $\rho_{\mathfrak{R}}(x,y) \leqslant \delta$ for any $y\in U_{\textsc{\textbf{u}} }$.
It means that $U_{\textsc{\textbf{u}} }$ belongs to the union of ${\#\mathfrak{R}}$ balls of radius $\delta$ each centered at $x_i$. 

We will show that intersection of $U_{\textsc{\textbf{u}} }$ with any of these balls consists of one region from $\mathcal{M}_n$ (and since this region is bordered by sets $S_{t,i}$ that are locally hyperplanes, this part is a convex polyhedron).

Assume that the interior of $U_{\textsc{\textbf{u}} }$ has two points $y,z$ such that the segment of length $<\delta$ connects $y$ and $z$ and intersects a set $S_{t,k}$ for some $t$ and $k \leqslant n$.

Let $\textsc{\textbf{u}}[k]=\beta$.
We will consider $x$ and $z$ as points of $\mathbb{R}^d$,
$\rho(x,y) < \delta$.
Denote 
\begin{align*}
   q_y:=A^k(y)^{(d)} \ \text{and} \ q_z:=A^k(z)^{(d)}.
\end{align*}
Since $\textsc{\textbf{u}}[k]=\beta$, 
\begin{align*}
    \log_b(\beta) \leqslant \{q_y\}, \, \{q_z\} <\log_b(\beta+1).
\end{align*}

But the segment that connects $A^k(y)$ and $A^k(z)$ contains a point with $d$-coordinate equal to $\log_b{(t)}$, hence $\lfloor q_x\rfloor \neq \lfloor q_y\rfloor $ and 
\begin{align*}
    \rho\left(A^k(x),A^k(y)\right) > \frac{1}{2}.
\end{align*}

Since the transformation $A$ is $\lambda$-Lipschitz,  
\begin{align*}
    \frac{K}{2\lambda} \leqslant \rho\left(A^i(y), A^i(z)\right) < \frac{K}{2}
\end{align*}
for some $1 < i < n$.
Then 
\begin{align*}
    \rho_{\mathfrak{R}}\left(f^i(y),f^i(z)\right) = \rho\left(A^i(y), A^i(z)\right) \geqslant \frac{K}{2\lambda} > \delta.
\end{align*}

There are two similar cases:

\begin{itemize}
    \item {\bf Case I:}
$i \leqslant n/2$.  
    Similarly violates  statement (2)
    of Lemma \ref{lecomp}.

    \item
    {\bf Case II:} 
 $i > n/2$. Since $y\sim_n z$, thus
$f^{i}(y) \sim_{(n-i)} f^i(z)$, but  this contradicts to Lemma \ref{lecomp} because $(n - i) > N(\delta)$.

\end{itemize}
\end{proof}

So, $\mathtt{p}_{\textsc{\textbf{w}}}(k) = \#\mathcal{M}_k/{\#\mathfrak{R}}$ for $k > N$. 
Therefore, to prove Theorem\ref{main}, we need to count the number of such domains and show that it is a polynomial of a single indeterminate $k$.

\subsection{Counting the number of regions in a hyperplane torus partition}
\label{proof_begin}

\begin{lemma}
\label{le:pol}

Let \( d \) and \( \alpha \) be positive integers. 
Let \( \textsc{P}(x) \) be a monic polynomial of degree \( d \), and let \( x_0 < \cdots < x_d \) be integers such that \( |\textsc{P}(x_i)| \leqslant \alpha \) for each \(  i \in \llbracket 0, d\rrbracket  \). Then \( |x_d - x_0| \) is bounded from above by a constant \( C \) that depends only on \( d \) and \( \alpha \).

\end{lemma}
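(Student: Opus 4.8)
The plan is to exploit the rigidity of a monic degree-$d$ polynomial under interpolation at $d+1$ nodes. Writing the Lagrange interpolation formula for $\textsc{P}$ at the nodes $x_0, \ldots, x_d$,
\[
\textsc{P}(x) = \sum_{i=0}^{d} \textsc{P}(x_i) \prod_{j \neq i} \frac{x - x_j}{x_i - x_j},
\]
I would compare the coefficient of $x^d$ on both sides. On the left it is $1$, since $\textsc{P}$ is monic; on the right it is the $d$-th divided difference of $\textsc{P}$. This produces the identity
\[
\sum_{i=0}^{d} \frac{\textsc{P}(x_i)}{\prod_{j \neq i}(x_i - x_j)} = 1,
\]
which converts the monicity hypothesis into a single equation relating the (bounded) values $\textsc{P}(x_i)$ to the (integer) node differences.

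Next I would insert the bound $|\textsc{P}(x_i)| \leqslant \alpha$. Applying the triangle inequality to the identity above gives
\[
1 \leqslant \alpha \sum_{i=0}^{d} \frac{1}{\prod_{j \neq i} |x_i - x_j|},
\]
so the sum of the $d+1$ nonnegative reciprocal products is at least $1/\alpha$. By pigeonhole, at least one summand is at least $\tfrac{1}{(d+1)\alpha}$; equivalently, there is an index $i^\ast$ for which
\[
\prod_{j \neq i^\ast} |x_{i^\ast} - x_j| \leqslant (d+1)\alpha.
\]

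Finally I would bound this same product from below in terms of the spread $\Delta := x_d - x_0$. Because $x_0 < \cdots < x_d$ are integers, each of the $d$ factors $|x_{i^\ast} - x_j|$ is at least $1$. Moreover, since $x_0 \leqslant x_{i^\ast} \leqslant x_d$, one of $x_{i^\ast} - x_0$ and $x_d - x_{i^\ast}$ is at least $\Delta/2$, so one factor of the product is at least $\Delta/2$. Hence $\prod_{j \neq i^\ast} |x_{i^\ast} - x_j| \geqslant \Delta/2$, and combining with the upper bound yields $\Delta \leqslant 2(d+1)\alpha$. The constant $C := 2(d+1)\alpha$ therefore works, and it depends only on $d$ and $\alpha$.

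I do not expect a genuine obstacle here; the argument is elementary once the divided-difference identity is in hand. The only points requiring care are the correct extraction of the leading coefficient (so that the right-hand side is exactly $1$ rather than, say, a lower-order coefficient), and the lower bound on the product, where one must use both that distinct integers differ by at least $1$ and that a single endpoint difference captures the full spread up to a factor of $2$.
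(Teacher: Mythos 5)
Your proof is correct and follows essentially the same route as the paper: both extract the identity $1=\sum_{i=0}^{d}\textsc{P}(x_i)/\prod_{j\neq i}(x_i-x_j)$ from Lagrange interpolation of the leading coefficient and then bound the node spread from the resulting inequality. Your pigeonhole step combined with the lower bound $\prod_{j\neq i^\ast}|x_{i^\ast}-x_j|\geqslant (x_d-x_0)/2$ is a minor (and cleanly justified) variant of the paper's uniform bound $\alpha/(x_d-x_0-1)$ on each term, yielding the slightly larger but equally valid constant $C=2(d+1)\alpha$.
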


\begin{proof}
By Lagrange interpolation formula, the leading coefficient is
\begin{align*}
    1 = \sum_{i = 0}^d \frac{\textsc{P}(x_i)}{(x_i - x_0)\cdots  (x_i - x_{i-1})(x_{i}-x_{i+1})\cdots  (x_i - x_d)}.
\end{align*}

Since each term is less than and equal to  $ \alpha/(x_d - x_0-1)$, we conclude that
\begin{align*}
    |x_d-x_0| \leqslant \alpha(d+1)  + 1.
\end{align*}
\end{proof}

Consider the \( d \)-dimensional torus \( \mathbb{T}^d = [0, 1)^d \). Define the {\it critical subsets} of \( \mathbb{T}^d \) by the conditions:
\[
    x_d = 0, \quad x_d = \log_b(2), \quad \cdots, \quad x_d = \log_b(b-1),
\]
where \( x_d \) denotes the \( d \)-coordinate of \( x \in \mathbb{T}^d \), and \( b \geqslant 5 \) is a fixed integer.

We recall the notation $\zeta = \log_b(a)$ and the map \( f: \mathbb{T}^d \to \mathbb{T}^d \) defined by:
\[
\begin{aligned}
x_1' &= \{ x_1 + \zeta \}, \\
x_2' &= \{ x_2 + 2x_1 + \zeta \}, \\
x_3' &= \{ x_3 + 3x_2 + 3x_1 + \zeta \}, \\
&\vdots \\
x_d' &= \left\{ x_d + d x_{d-1} + \binom{d}{2} x_{d-2} + \dots + \binom{d}{d-1} x_1 + \zeta \right\},
\end{aligned}
\]
where \( \{ x \} \) denotes the fractional part of \( x \), and \( \binom{d}{k} \) is the binomial coefficient.

\begin{lemma}\label{le:intersect}
For each point \( x \in \mathbb{T}^d \), define the set \( I_x \subseteq \mathbb{Z} \) as:
\[
I_x = \{ k \in \mathbb{Z} : f^k(x) \text{ lies in one of the critical subsets} \}.
\]
 
\begin{enumerate}
    \item[\texttt{(i)}.]
     For every finite subset \( I \subset \mathbb{Z} \) of size \( d \), the set of points \( x \in \mathbb{T}^d \) such that \( I \subseteq I_x \) is finite.

  \item[\texttt{(ii)}.]
   There exists a constant \( C \) depending only on $d$, $a$ and $b$ such that if \( \# I_x > d \), then:
\[
\max_{k \in I_x}{(k)} - \min_{k \in I_x}{(k)} \leqslant C.
\]
\end{enumerate}

\end{lemma}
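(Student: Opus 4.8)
The plan is to handle the two parts with related linear-algebraic devices, both resting on the observation that for fixed $x$ the function $g_x(k) := (A^k x)^{(d)}$ is a polynomial in $k$ of degree exactly $d$ with leading coefficient $\zeta = \log_b a$; this follows from the computation in the stable-vector lemma, since the $x$-dependent part $(M^k x)^{(d)}$ has degree at most $d-1$. Note that $k \in I_x$ exactly when $\{g_x(k)\} \in \{\log_b 1, \ldots, \log_b(b-1)\}$, i.e. when $g_x(k) = \log_b j + n$ for some $j \in \llbracket 1, b-1\rrbracket$ and $n \in \mathbb{Z}$. For \texttt{(i)}, fix $I = \{k_1 < \cdots < k_d\}$; the condition $k_i \in I_x$ reads $\ell_{k_i}(x) \equiv \log_b j_i \pmod 1$ for some $j_i$, where $\ell_k$ is the integer linear form given by the last row of $M^k$, namely $\ell_k = (\textsc{P}_{d,1}(k), \ldots, \textsc{P}_{d,d-1}(k), 1)$ with $\deg \textsc{P}_{d,j} = d-j$. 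First I would show that for distinct $k_1, \ldots, k_d$ these forms are linearly independent: since $\textsc{P}_{d,1}, \ldots, \textsc{P}_{d,d}$ have distinct degrees $d-1, \ldots, 0$ they form a basis of the polynomials of degree $<d$, so $(\textsc{P}_{d,j}(k_i))_{i,j}$ is a generalized Vandermonde matrix and is invertible (a nonzero polynomial of degree $<d$ cannot vanish at $d$ distinct points). Then, for each fixed tuple $(j_1,\ldots,j_d)$, the $d$ congruences define a coset of the kernel of a surjective endomorphism of $\mathbb{T}^d$ with nonzero integer determinant, hence finitely many points; summing over the finitely many digit-tuples yields \texttt{(i)}.

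For \texttt{(ii)}, suppose $k_0 < \cdots < k_d$ lie in $I_x$ and write $g_x(k_i) = \log_b j_i + n_i$. Since $g_x$ has degree $d$ and leading coefficient $\zeta$, its $d$-th divided difference equals $\zeta$, giving $\sum_i g_x(k_i)/P_i = \log_b a$ with $P_i = \prod_{l \neq i}(k_i - k_l)$. Multiplying through by $D \log b$, where $D = \prod_{p<q}(k_q - k_p)$ is the Vandermonde product and each $m_i := D/P_i$ is an integer, I obtain a multiplicative relation $a^{D} = b^{M'} \prod_i j_i^{m_i}$ for some $M' \in \mathbb{Z}$. If $\log a$ is not a rational combination of logarithms of primes, this forces $a^D \in \mathbb{Q}_{>0}$ and is impossible for $D \geqslant 1$, so $I_x$ cannot contain $d+1$ points and \texttt{(ii)} is vacuous; otherwise write $\rho_p = v_p(a)$ and $\beta_p = v_p(b)$ for the finitely many relevant primes $p$.

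Comparing $p$-adic valuations in the relation gives, for every such $p$, the identity $L_p := \sum_i v_p(j_i)/P_i = \rho_p - (M'/D)\beta_p$, so the vector $(L_p)_p$ equals $\rho - s\beta$ with $s = M'/D \in \mathbb{Q}$. Since $a$ and $b$ are multiplicatively independent the rational vectors $\rho$ and $\beta$ are linearly independent, hence $\rho - s\beta$ stays at distance at least $\eta > 0$ from the origin for every real $s$, where $\eta$ is the distance from $\rho$ to the line $\mathbb{R}\beta$; consequently some $L_{p_0}$ satisfies $|L_{p_0}| \geqslant c_0 > 0$ with $c_0$ depending only on $a$ and $b$. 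Now $L_{p_0}$ is precisely the leading coefficient of the interpolation polynomial $V(k)$ of degree $\leqslant d$ through the points $(k_i, v_{p_0}(j_i))$, so $V$ has degree exactly $d$ and $W := V/L_{p_0}$ is monic of degree $d$ with $|W(k_i)| = |v_{p_0}(j_i)|/|L_{p_0}| \leqslant \alpha^*$, a bound depending only on $a,b$ (as $v_{p_0}(j_i) \leqslant \log_2(b-1)$). Applying Lemma \ref{le:pol} to $W$ at the integer points $k_0, \ldots, k_d$ bounds $k_d - k_0$; choosing these $d+1$ points to include $\min I_x$ and $\max I_x$ (possible since $\# I_x > d$) bounds the full diameter of $I_x$ by a constant $C$ depending only on $d$, $a$, $b$.

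The hard part will be the lower bound $|L_{p_0}| \geqslant c_0$: this is the single place where multiplicative independence is used quantitatively, and it is what converts the purely algebraic Diophantine relation into a uniform bound on a leading coefficient, thereby making Lemma \ref{le:pol} applicable with a constant $\alpha^*$ that is independent of the configuration $k_0 < \cdots < k_d$. The supporting computations are the verification that $g_x$ has leading coefficient $\zeta$ for every $x$, that the $d$-th divided difference produces the stated multiplicative relation, and that $L_p$ is exactly the leading coefficient of the valuation-interpolation polynomial.
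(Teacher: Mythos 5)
Your part (i) is essentially the paper's argument: the same matrix of integer linear forms $M_{k_1,\dots,k_d}$, invertibility by a Vandermonde-type argument, and finiteness of each solution set as a coset of the (finite) kernel of the induced torus endomorphism; your phrasing of invertibility via ``a nonzero polynomial of degree $<d$ cannot vanish at $d$ points'' is equivalent to the paper's factorization into a Vandermonde matrix times a diagonal matrix. Part (ii), however, takes a genuinely different route. The paper keeps the unknowns $x_1,\dots,x_d$ in play: it uses part (i) to place them in the $\mathbb{Q}$-span $V$ of $1,\zeta,\log_b 2,\dots,\log_b(b-1)$, expands everything in a basis of $V$, and applies the $\mathbb{Q}$-linear functional $\alpha_1\pi_2-\alpha_2\pi_1$ (which annihilates $1$, hence the integers $n_k$, but sends $\zeta$ to $D\neq 0$) to turn the membership conditions into a single monic degree-$d$ polynomial in $k$ taking boundedly many values on $I_x$. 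You instead eliminate $x$ outright by taking the $d$-th divided difference of $g_x$ over $d+1$ points of $I_x$, obtaining the clean multiplicative relation $a^{D}=b^{M'}\prod_i j_i^{m_i}$, and then recover the needed monic bounded-valued polynomial by applying $p$-adic valuations and using multiplicative independence of $a$ and $b$ to get a uniform lower bound $|L_{p_0}|\geqslant c_0$ on some leading coefficient. Both roads terminate in Lemma \ref{le:pol}. Your version buys independence from part (i) (you never need $x\in V^d$) and makes the role of multiplicative independence explicit and quantitative; the cost is the case split on whether some positive power of $a$ is rational (harmless: otherwise $a^{D}\in\mathbb{Q}_{>0}$ with $D\geqslant 1$ is already a contradiction and (ii) is vacuous) and the bookkeeping with the finite prime set and the fractional valuations $v_p(a)=v_p(a^{D_0})/D_0$, which does all go through since $\{n:a^n\in\mathbb{Q}\}$ is a subgroup of $\mathbb{Z}$ and all relevant primes are bounded in terms of $b$. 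One small slip worth fixing: in part (i) the congruence should read $\ell_{k_i}(x)\equiv \log_b j_i - k_i^{d}\zeta \pmod 1$ (you dropped the $k_i^{d}\zeta$ term), but this is an additive constant and does not affect the finiteness of the solution set.
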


\begin{proof}
\texttt{(i)}.
	It is easy to verify it by induction on $k$ that if $x = (x_1,\ldots, x_d),$ then the point $f^k (x)$ has the $d$-coordinate
	\begin{align*}
	    f^k(x)^{(d)} = \left\{x_d + dkx_{d-1}+\cdots + \binom{d}{i} k^ix_{d-i} + \cdots +d k^{d-1} x_1 + k^d\zeta \right\}.
	\end{align*}
	
We write for $d$ different indices $k$ equalities of the form
\begin{align*}
    x_d + dkx_{d-1}+\cdots + \binom{d}{i}k^ix_{d-i} + \cdots +d k^{d-1} x_1 + k^d\zeta = \log_b(t_k) + n_k,
\end{align*}
where $t_k$ is an integer from $1$ to $b-1$, and $n_k$ is an arbitrary integer.

For
$d$
different
$k_1 < \cdots < k_d$
this result is a system of $d$ equations with $d$ unknown $x_1, \cdots, x_d $.
This system of linear equations has matrix
\begin{align*}
    M_{k_1,\dots,k_d} = 
\begin{bmatrix} 
1 & d k_1 & \dots &  \binom{d}{i} k_1^i & \dots & dk_1^{d-1} \\
\vdots & \ddots & & & & \vdots\\
\vdots &  & \ddots& & & \vdots\\
\vdots &  & &\ddots & & \vdots\\
\vdots &  & & & \ddots & \vdots\\
1 & d k_d & \dots &  \binom{d}{i} k_d^i & \dots & dk_d^{d-1} \\
\end{bmatrix}.
\end{align*}

\medskip

This matrix
is invertible,
because it is
 is
a Vandermonde matrix with integer coefficients multiplied by a non-degenerate 
integer diagonal matrix.
Hence each set of
$\{t_k\}$
and
$\{n_k\}$
corresponds to exactly
$1$ solution, and each component of the solution is a $\mathbb{Q}$-combination of numbers $\log_b(2),\dots, log_b(b-1),$ and $\zeta$. 
If
we fix the set
$\{t_k\}$
and consider all possible integer vectors
$(n_{k_1}, \cdots, n_{k_d})$,
the set of solutions will be a shift of the lattice
$M_{k_1,\dots,k_d}^{-1}(\mathbb{Z}^d)$.
Since
$M_{k_1,\dots,k_d}^{-1}$
has only rational coefficients, the number of solution inside torus $\mathbb{T}^d$
will be finite
( and equal to 
$\det{(M_{k_1, \ldots, k_d})}$),
so
\texttt{(i)}
is proved.

\medskip

\texttt{(ii)}.
Let $V$ be the vector space over $\mathbb{Q}$, generated by numbers $1,\zeta$ and $\log_b(i)$ for $i \in \llbracket 2, b-1 \rrbracket$. Let $v_1,\dots, v_m$ be a basis of this vector space. 
Let $1 = \alpha_1 v_1 + \dots + \alpha_k v_m$ and $\zeta = \beta_1 v_1 + \dots + \beta_m v_m$. 
Since $\zeta\not \in \mathbb{Q}$, we can assume that 
\[
D:=\det \begin{pmatrix}
\alpha_1 & \alpha_2 \\
\beta_1 & \beta_2
\end{pmatrix}  \neq 0.
\]

\medskip

If
$\#I_x \geqslant d$,
then we know that
the components of $x$, i.e.
each of the $x_i$'s are elements of $V$.
Let $x_i = \gamma_{i,1}v_1 + \dots + \gamma_{i,m} v_m$ for all $\gamma_{i,j}\in \mathbb{Q}$.

\medskip

If $k\in I_x$,  then 
\begin{align}\label{x_k}
    x_d + dkx_{d-1}+\cdots + \binom{d}{i} k^ix_{d-i} + \cdots +d k^{d-1} x_1 + k^d \zeta = 
    \log_b(t_k) +n_k
\end{align}

for some $t_k\in \llbracket 1,b-1 \rrbracket$ and $n_k\in \mathbb{Z}$.

\medskip

Let $\log_b(t_k) := \delta_{k,1}v_1+\dots+\delta_{k,m}v_m$ where all $\delta_{i,j}\in \mathbb{Q}$.

\newpage

Rewriting  \ref{x_k} in basis $v_1,\dots, v_m$ and looking at $v_1$ and $v_2$ components gives us two following relations:

\[
\begin{cases}
\gamma_{d,1} + \cdots + \binom{d}{i} k^i\gamma_{d-i,1} + \cdots +d k^{d-1} \gamma_{1,1} + k^d \beta_1 = 
    \delta_{k,1} +n_k \alpha_1 \\
\gamma_{d,2} + \cdots + \binom{d}{i} k^i\gamma_{d-i,2} + \cdots +d k^{d-1} \gamma_{1,2} + k^d \beta_2 = 
    \delta_{k,2} +n_k \alpha_2
\end{cases}
\]

For $i \in \llbracket 1,d \rrbracket$ denote $\alpha_1\gamma_{i,2} - \alpha_2\gamma_{i,1}$ by $r_i$. 
Denote $\alpha_1 \delta_{k,2} - \alpha_2 \delta_{k,1}$ by $c_k$.
Then

\begin{align*}
r_{d} + \cdots + \binom{d}{i} k^ir_{d-i} + \cdots +d k^{d-1} r_1 + k^d D = 
    c_k.
\end{align*}

Note that all $c_k$ belong to a finite set that depends only on $a$ and $b$.

Assume that
$\#I_x \geqslant d+1$ and 
apply Lemma \ref{le:pol} to the polynomial
\begin{align*}
    \textsc{P}(k) = k^d + \cdots + \binom{d}{i} \frac{r_{d-i}}{D} k^i + \cdots + \frac{r_d}{D}.
\end{align*}

We get that the difference between the largest and smallest elements of $ I_x $ is bounded from above by some constant that does not depend on $x$.

\end{proof}

\begin{lemma}
	Let $k$ be a natural number.
 The union of $S_{t,0},  \ldots, S_{t,k-1}$ divides $\mathbb{T}^d$ into $N(k)$ connected regions.
 Then
 $N(k)$ starting from some
 $k_0$ is a polynomial in
 $k$ and the degree of this polynomial is
 $d(d+1)/2$.
\end{lemma}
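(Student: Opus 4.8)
The plan is to read off the count $N(k)$ from the combinatorics of the arrangement $\mathcal{A}_k = \{S_{t,i} : t \in \llbracket 1,b-1\rrbracket,\ i \in \llbracket 0,k-1\rrbracket\}$ on $\mathbb{T}^d$, reducing the number of top-dimensional regions to the number of $0$-dimensional intersection points. First I would record, using the $d$-coordinate formula from Lemma \ref{le:intersect}, that each $S_{t,i}$ is the codimension-one subtorus $\langle N_i, x\rangle \equiv \log_b(t) - i^d\zeta \pmod 1$ with primitive integer normal $N_i := (d\,i^{d-1}, \binom{d}{2} i^{d-2}, \dots, d\,i, 1)$. Thus, for fixed $i$, the $b-1$ subtori of the $i$-th family are pairwise parallel, whereas subtori from $d$ distinct families $i_1 < \cdots < i_d$ meet transversally, since the matrix of their normals is exactly the invertible matrix $M_{i_1,\dots,i_d}$ of Lemma \ref{le:intersect}\texttt{(i)}. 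Because the preceding lemma guarantees that for all large $k$ the regions of $\mathcal{M}_k$ are convex polyhedra, $\mathcal{A}_k$ induces a regular polyhedral decomposition of $\mathbb{T}^d$, and $N(k)$ is its number of $d$-cells, i.e.\ its $f$-vector entry $f_d$.

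The identity I would exploit is that, for an essential arrangement in general position on $\mathbb{T}^d$, the number of chambers equals the number of vertices. I would prove this by induction on $d$ together with $\chi(\mathbb{T}^d)=0$: every $j$-flat is a $j$-subtorus carrying an induced general-position arrangement whose chambers are precisely the $j$-faces of $\mathcal{A}_k$ supported on it, and by the inductive hypothesis these number exactly the vertices lying on the flat; summing over flats and counting the $\binom{d}{d-j}$ flats through a fixed vertex yields $f_j = \binom{d}{j} V$, which makes the Euler relation $\sum_j (-1)^j f_j = 0$ automatic and forces $f_d = V$. Hence, up to corrections arising from the failure of general position, $N(k)$ equals the vertex count $V(k)$.

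Next I would count vertices. A generic vertex is a transversal intersection of one subtorus from each of $d$ distinct families; for a choice $i_1 < \cdots < i_d$ and offsets $t_1,\dots,t_d \in \llbracket 1,b-1\rrbracket$ the number of common points on $\mathbb{T}^d$ is $|\det M_{i_1,\dots,i_d}| = \left(\prod_{j=0}^{d-1}\binom{d}{j}\right)\prod_{r<s}(i_s - i_r)$, because $M_{i_1,\dots,i_d}$ factors as a Vandermonde matrix times the diagonal matrix of binomials. Summing over all offsets and all $d$-subsets gives, up to corrections,
\[
V(k) = (b-1)^d \left( \prod_{j=0}^{d-1} \binom{d}{j} \right) \sum_{0 \leqslant i_1 < \cdots < i_d \leqslant k-1} \prod_{r<s}(i_s - i_r) \;+\; (\text{corrections}).
\]
The summand is a symmetric polynomial of degree $\binom{d}{2}$ in the $d$ variables, and iterated summation over a $d$-dimensional simplicial range of size $k$ raises the degree by $d$; so the main term is, for large $k$, a genuine polynomial in $k$ of degree $\binom{d}{2}+d = d(d+1)/2$ with positive leading coefficient.

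The hard part, and exactly where the non-general position enters, is to show that the corrections are themselves eventually polynomial and of strictly smaller degree. Here I would invoke Lemma \ref{le:intersect}\texttt{(ii)}: any point lying on more than $d$ subtori has all of its family-indices confined to a window of bounded length $C$. Consequently both the over-counting of non-generic vertices in the displayed sum and the local discrepancy between the chamber and vertex counts are localized to index-windows of width at most $C$, of which there are only finitely many combinatorial types. Since $f$ is a self-homeomorphism of $\mathbb{T}^d$, the shift $i \mapsto i+s$ carries the subtori with indices in one window to those in the shifted window via $f^{-s}$, preserving all incidences and local face numbers; thus each cluster's contribution is translation-invariant and depends only on relative indices and offsets. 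Summing such a bounded-width, translation-invariant contribution over the $\sim k$ admissible positions in $\llbracket 0,k-1\rrbracket$ produces, for large $k$, an exact polynomial in $k$ of degree at most $1$, which is strictly below $d(d+1)/2$ for $d\geqslant 2$ (and the case $d=1$ is the classical affine statement). Collecting the main term with the corrections, $N(k)$ agrees for all sufficiently large $k$ with a polynomial of degree exactly $d(d+1)/2$, as required by Theorem \ref{main}.
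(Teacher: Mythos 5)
Your proposal is correct and follows essentially the same route as the paper: both reduce the chamber count to the vertex count, evaluate the number of generic intersection points for each index set $i_1<\cdots<i_d$ as $\left|\det M_{i_1,\dots,i_d}\right|$ via the Vandermonde factorization, obtain the main term of degree $\binom{d}{2}+d=d(d+1)/2$ by summing over index subsets, and invoke Lemma \ref{le:intersect}\texttt{(ii)} together with translation by powers of $f$ to show that the degenerate clusters contribute only an eventually affine correction. The single local difference is the justification of the identity ``chambers $=$ vertices'' in the generic case: the paper perturbs the surfaces into general position and assigns to each convex region the unique vertex minimizing a generic linear functional $\langle v,\cdot\rangle$, whereas you derive it from $\chi(\mathbb{T}^d)=0$ via the relations $f_j=\binom{d}{j}V$; both arguments are standard and sound, yours avoiding an explicit perturbation step at the cost of needing the induced arrangements on all flats to be essential (which holds once $k\geqslant d$).
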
 

\begin{proof}
Consider the surfaces
 $$S_{t,i}:=f^{-i}\left(\{x\in \mathbb{T}^d: x^{(d)} = \log_b(t)\}\right).$$

From
statement
\texttt{(i)}
of the previous Lemma, we see that the set of the points
in
$\mathbb{T}^d$,
which belong to at least $d$ of these surfaces, is finite.

Shifting each $S_{t,i}$ slightly parallel to itself so that the resulting surfaces intersect no more than $d$ at one point, we get a family of surfaces $\{S'_{t, i}\}$.
First of all, we show that the number $ N'(k) $ of domains into which $\mathbb{T}^d$ is partitioned by $ \{S'_{t, i} \} $ is a polynomial in $k$.

We assume that $k \geqslant d$.
Choose an arbitrary vector $v$ that is transversal to all $S_{t,i}$.
 Each domain $\Phi$ from the partition is a convex polyhedron; this polyhedron has a single vertex $v(\Phi)$ that minimizes the linear functional of the scalar product $\langle v, \cdot \rangle$. 
It is easy to see that this correspondence gives a bijection between the regions of the partition points that belong to $d$ of the surfaces $\{S'_{t,i}\}$.

Consider $d$ surfaces $S'_{t_1,i_1}, \ldots, S ' _{t_d,i_d}$.
It is clear that the number of $d$-intersection points depends only on the set $(i_1, \ldots, i_d)$, denote this number by $N(i_1,\ldots, i_d)$.
Set
\begin{align*}
    h_i(x_1,\ldots, x_d)= x_d + dix_{d-1}+\cdots + \binom{d}{j}i^jx_{d-j} + \cdots + d i^{d-1} x_1.
\end{align*}

The number $N(i_1,\ldots, i_d)$ is equal to the number of solutions of the system
$f_{i_l}(x)\in \mathbb{Z}$ 
for $l \in  \llbracket 1,d \rrbracket$
per unit volume, i.e., the absolute value of the determinant of the matrix
\begin{align*}
    M_{i_1,\dots,i_d} = 
\begin{bmatrix} 
1 & d i_1 & \dots &  \binom{d}{j} i_1^j & \dots & di_1^{d-1} \\
\vdots & \ddots & & & & \vdots\\
\vdots &  & \ddots& & & \vdots\\
\vdots &  & &\ddots & & \vdots\\
\vdots &  & & & \ddots & \vdots\\
1 & d i_d & \dots &  \binom{d}{j} i_d^j & \dots & di_d^{d-1} \\
\end{bmatrix}.
\end{align*}

It is easy to see via  Vandermonde determinant that
\begin{align*}
    N(i_1,\ldots, i_d) =&\left|\det{(M_{i_1,\cdots,i_d})}\right|\\ = &d\cdot \binom{d}{2} \cdots  \binom{d}{d-1} \cdot \prod_{1 \leqslant j_1 < j_2 \leqslant d} (i_{j_2} - i_{j_1}),
\end{align*}
and therefore, the total number of regions is equal
\begin{align*}
    N'(k) =  (b-1)^d \cdot d\cdot \binom{d}{2} \cdots \binom{d}{d-1} \cdot \sum_{0 \leqslant i_1 < \cdots < i_d \leqslant k} \ \ \prod_{1 \leqslant j_1 < j_2 \leqslant d} (i_{j_2} - i_{j_1}).
\end{align*}

It is easy to verify that  the right is a linear combination of a finite number (depending only on $d$) of sums of the form
\begin{align*}
    \sum_{0 \leqslant i_1 < \cdots < i_d \leqslant k}  p(i_{j_1},\ldots,i_{j_d}),
\end{align*}
where $p$ is a monomial.

By induction on $k$ and  $d$, it is easy to obtain that each such sum is a polynomial in $k$.

So $N'(k)$ is also a polynomial in $k$.
The function $N'(k)$ behaves asymptotically as $k^{d(d+1)/2}$, which means that it is a polynomial of degree equal to ${d(d+1)/2}$.

Now we find the difference between $N(k)$ and $N'(k)$.
We call a point of $\mathbb{T}^d$ where $d' > d $ of surfaces $S_{t,i}$ intersect {\it singular}.
Each singular point decreases the number of the regions by  $c_{d'}$, where $c_{d'}$ depends only on $d'$ (and is equal to the number of bounded regions of the partition of $\mathbb{R}^d$ by $d'$ hyperplanes of general position).  

It follows from Lemma \ref{le:intersect} that the numbers $d'$ are bounded by some $C(d)$.
Each singular point has form $f^k(x)$ where $x\in S_{t,1}$ is a singular point.
Thus the number of singular points for each $d'$ is an affine function in $k$ (for all $k$ greater than some $k_0$).

Hence, $N'(k)-N(k)$ is an affine function after some point, and hence the value $N(k)$ is after some point a polynomial of degree $d(d+1)/2$.
\end{proof}

\section{Concluding remarks}
In this section, we go over some relevant ideas and intriguing problems that our findings raise.

\subsection{Other complexity functions}
In Section \ref{FC} we defined the  complexity function of a word.
   J.  Cassaigne brings up an intriguing problem \cite{35}:

  \medskip
    
    \textbf{Problem 1.}
    \textit{Is it possible to find a family of word $\mathfrak{F}$ such that the complexity function be equal to a given function f(n) ?}

  \medskip
    
    Such families have been discovered for some especial functions.
   For instance,
$\mathtt{p}_{\textsc{\textbf{w}}}(n) = n+1$
    if and only if $\textsc{\textbf{w}}$ is Sturmian.
    Also
    $\mathtt{p}_{\textsc{\textbf{w}}}(n) = \mathcal{O}(1)$
    if and only if $\textsc{\textbf{w}}$ is eventually periodic;
    and
    $\mathtt{p}_{\textsc{\textbf{w}}}(n) = n+ \mathcal{O}(1)$
    if and only if $\textsc{\textbf{w}}$ is  quasi-Sturmian
    \cite{50}.

\medskip

In the general case of a coding of an irrational rotation, the complexity
has the form $\mathtt{p}(n) = an + b$, for $n$ large enough (see Theorem 10 in \cite{14,18}, for the whole proof).

The converse is not true: every word
of ultimately affine complexity is not necessarily obtained as a coding of
rotation. Didier gives in \cite{19} a characterization of codings of rotations.

On the other hand, A. Heinis covered some of this subject in his Ph.D. thesis \cite{36}.

In \cite{37},   the class $\mathtt{p}(n) = n+ {o}(n)$ is studied, i.e., $\lim_{n \to \infty} {\mathtt{p}(n)}/{n}=1$;
also   Rote  considered  words with complexity  $\mathtt{p}(n) =2n$ (see \cite{20}).
Recently, Cassaigne,  Labbé, and  Leroy introduced a set of words of complexity $2n+1$ in \cite{38}.
    Based on these considerations,  we have found a set of words whose function has polynomial behavior.
 However, if the complexity of a word $\textsc{\textbf{u}}$ has
the form $\mathtt{p}(n) = n + k$, for $n$ large enough, then $\textsc{\textbf{u}}$ is the image of a
Sturmian word by a morphism, up to a prefix of finite length
(see for instance \cite{18,21}).

\subsection{Types of complexity}
In  combinatorics of words, various measures of complexity have been introduced (due to space limitation, we only mention some bibliographic pointers):  subword
complexity, arithmetic complexity (related to Van Der Waerden theorem about arithmetic progression $r$-coloring of the integers) \cite{39}, pattern complexity \cite{41}, cyclic complexity, abelian and $k$-abelian complexities \cite{42} or binomial complexity \cite{43}.

Very recently,  prefix palindromic length complexity was considered in \cite{45}.
Furthermore,  let us also mention window complexity \cite{46}.

\medskip
    
    \textbf{Problem 2.}
    \textit{What are the different complexities for our word  $\textsc{\textbf{w}}$, i.e.,
    for the sequence of the most significant digits of $2^{n^d}$?}

\subsection{Significant digit}
We can generalise the problem by considering not the most significant digit but the 
$k^{\text{th}}$
one.

\medskip

Here, we can state the following problem, which is related to this research and was proposed by B. Kra \cite{47}:

\medskip
    
    \textbf{Problem 4.}
    \textit{What is the complexity function of
    the $k^{\text{th}}$ significant digits of $2^{n^d}$?}

\medskip

\newpage

\section*{Acknowledgments}
We would like to greatly thank Alexei  Kanel-Belov who gave us this interesting research problem.
There are no words that can express our gratitude towards our friend,
Michel Rigo for the valuable comments he made.
Next, the first author would like to thanks Julien Cassaigne for his feedback on our problem and wonderful discussion.

\medskip

\normalsize

\end{document}